\newcommand{\F}{ \mathscr F }
\newcommand{\sL}{\mbox{${\mathscr L}$}}
\newcommand{\sA}{\mbox{${\mathscr A}$}}
\newcommand{\sK}{\mbox{${\mathscr K}$}}
\newcommand{\sG}{\mbox{${\mathscr G}$}}
\newcommand{\E}{\mbox{${\mathscr E}$}}
\renewcommand{\O}{{\mathcal O}}
\renewcommand{\H}{\text{\rm H}}
\renewcommand{\dim}{\mathrm{dim}\,}
\def\H{\mathscr H}
\newtheorem{lema}{Lemma}[section]
\newtheorem{cor}[lema]{Corollary}
\newtheorem{teo}[lema]{Theorem}
\newtheorem{prop}[lema]{Proposition}
\theoremstyle{definition}
\newtheorem{remark}[lema]{Remark}
\newtheorem{defi}[lema]{Definition}
\newtheorem{exe}[lema]{Example}
\begin{document}

\title[Rational  Morita equivalence for holomorphic Poisson modules]
{ Rational  Morita equivalence for holomorphic Poisson modules}

\author[M. Corr\^ea]{M. Corr\^ea}
\thanks{ }
\dedicatory{}
\address{ICEX-UFMG \\ Departamento de Matem\'atica \\
Av. Ant\^onio Carlos, 6627 \\ 31270-901 Belo Horizonte-MG, Brazil}
\email{mauriciojr@ufmg.br}

\subjclass{Primary  17B63, 53D17, 70G45, 37F75 } \keywords{Poisson modules,  weak  Morita equivalence,  meromorphic connections, co-Higgs sheaves, Holomorphic foliations}


\begin{abstract}
We introduce a  weak concept of Morita equivalence,  in the birational context, for   Poisson modules on complex  normal Poisson projective   varieties.  We show that Poisson modules,
on   projective varieties with mild   singularities,  are  either rationally  Morita  
equivalent to a flat partial holomorphic  sheaf, or   a sheaf  with a meromorphic  flat connection or a co-Higgs  sheaf.  As an application, we  
study the geometry of rank two  meromorphic rank two  $\mathfrak{sl}_2$-Poisson modules 
which can be interpreted as a Poisson  analogous   to transversally projective structures for codimension one holomorphic foliations. Moreover, we describe the geometry of the symplectic foliation induced by the  Poisson connection on the  projectivization  of the Poisson module.
\end{abstract}

\maketitle

\section{Introduction}

K. Morita in  his celebrated work  \cite{Morita}  introduced  an equivalence in algebra 
 proving  that  two rings have equivalent categories of left
modules if and only if there exists an equivalence bimodule for the rings. 
Weinstein   \cite{W3}  and Xu \cite{X1}  have  introduced a  geometric Morita equivalence in the context of integrable Poisson real manifolds having as one of the motivations  the fact that   symplectic realizations of Poisson manifolds   is the analogous to  representations
 of associative algebras.   For non-integrable Poisson manifolds, an infinitesimal notion of Morita equivalence has been 
  introduced  by Crainic in  \cite{Crainic} and 
 by  Ginzburg  in \cite{G} in order to study the invariance, respectively,  of  Poisson cohomology and  Poisson  Grothendieck groups. 

In this work we introduce  a    weak concept of Morita equivalence  in the birational context. More precisely, 
 we say that two Poisson normal projective varieties  $(X,\sigma_1)$ and  $  (Y,\sigma_2) $   are  {\it rationally   Morita equivalent   }  
 if there exists  a   normal variety $ (S,\varrho)$, with a (possibly meromorphic ) Poisson bivector $\varrho$, and  two  arrows  
$$
\xymatrix@R=10pt{
 &(S,\varrho) \ar@{->}[dl]_{f} \ar@{->}[dr]^{h}& \\
(X,\sigma_1)& & (Y,\sigma_2),}
$$
such that  $f$ and $h$ are dominant     Poisson   morphisms. Our main aim
is to  show  that  holomorphic Poisson  modules,  on  projective  varieties  with  mild singularities,  are  either  rationally  Morita  equivalent  to  a  flat  partial  holomorphic  sheaf,  or a sheaf with a meromorphic flat connection or a co-Higgs sheaf.

A. Polishchuk in \cite{P} has studied the algebraic geometry of Poisson modules  motived by  Bondal's conjecture  about the non-triviality of  the  degeneration locus  of a  Poisson structure, see   also    \cite{Beauville2, GPym1,  Druel2}.  Poisson modules also appear in the context of generalized complex geometry introduced by N. Hitchin in \cite{H3} and developed by  M.  Gualtieri in \cite{Gualtieri2}.  
 Gualtieri's concept of a generalized holomorphic bundle \cite{Gualtieri1} in the Poisson case coincides with the notion of
Poisson modules and if the Poisson structure is the trivial one  we obtain  a co-Higgs bundle \cite{H}. See also   \cite{BGHR,Rayan,C,BHu, BS, Col} for more details  on the study of  co-Higgs sheaves and their moduli spaces.

A Poisson structure on  a  projective complex manifold $X$ induces  a natural Poisson structure on its   minimal model by pushing forward the Poisson bivector.
Thus, it is natural to consider Poisson varieties with mild singularities motived by the   development of the minimal model program.   
We say that a Poisson variety  $(X,\sigma)$  is {\it  klt }    if $X$ is a   Kawamata log terminal(klt)  variety   and the Poisson structure $\sigma$ is  either   generically symplectic   or  the associated  symplectic foliation  $\F_{\sigma}$ 
has canonical  singularities.   We say that  $(X,\sigma)$  is  {\it transcendental}, in the spirit of \cite{ AD2, LPT}, if there is no positive-dimensional algebraic
subvariety through a general point of $X$ that is tangent to the  symplectic  foliation $\F_{\sigma}$.

\begin{teo}\label{teo-mori}
Let  $(E, \nabla) $ be a  locally free Poisson module   on a    klt   Poisson  projective  variety $(X,\sigma)$.  
Then  at least one of the following holds up to  rational   Morita equivalence. 
\begin{itemize}
\item[(a)]   $(E, \nabla) $ corresponds to  a   flat holomorphic sheaf   on a transcendental Poisson  variety;

\item[(b)]    $(E, \nabla) $ corresponds to a  meromorphic    flat   connection on a generically symplectic variety. 

\item[(c)]      $(E, \nabla) $ corresponds  to a co-Higgs  sheaf on a   variety with trivial Poisson structure . 

\item[(d)]      $(E, \nabla) $ corresponds  to a meromorphic  co-Higgs  sheaf  $(E_0, \psi) $  
  on a transcendental Poisson variety $(Y,\sigma_0)$, there exist
a rational map $\zeta:Y  \dashrightarrow B$,  over a variety $B$ with $\dim(B)= \dim(\F_{\sigma_0})$,  and the co-Higgs field  
$ \phi$ is tangent to $T_{Y|B}$  and  satisfies $D_0(\phi)=0$, where $D_0$ is  a  meromorphic extension of a  Poisson connection on $T_{Y|B}\otimes End(E_0)$.
\end{itemize} 
\end{teo}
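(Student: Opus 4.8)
The plan is to leverage the structure theory for holomorphic foliations under the minimal model program, applying it to the symplectic foliation $\F_\sigma$ and then transporting the Poisson module $(E,\nabla)$ along the resulting rational maps. The four alternatives (a)--(d) mirror a dichotomy one expects from the dynamics of $\F_\sigma$: whether algebraic leaves exist, whether the Poisson structure is generically symplectic, and whether the residual part looks Higgs-like. So the first step is to run a case analysis on the symplectic foliation. If $(X,\sigma)$ is generically symplectic, one argues that a Poisson connection $\nabla$ becomes, on the symplectic locus, essentially a flat connection along the leaves, and the klt hypothesis together with the canonical singularities of $\F_\sigma$ lets one extend it meromorphically across the degeneracy locus of $\sigma$; this yields case (b). If instead $\sigma\equiv 0$ generically, the defining relation for a Poisson module degenerates to $\nabla$ being $\cO_X$-linear in the appropriate slot, which is exactly the data of a co-Higgs field, producing case (c).

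\medskip

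The substantive work is the transcendental case, where $\F_\sigma$ has no positive-dimensional algebraic leaf through a general point. Here I would invoke the algebraic reduction / fibration results in the style of \cite{AD2, LPT}: one produces the rational map $\zeta\colon Y\dashrightarrow B$ whose fibers are the minimal positive-dimensional algebraic subvarieties tangent to the foliation, so that $\dim(B)=\dim(\F_{\sigma_0})$ after passing to a suitable model $(Y,\sigma_0)$ via a rational Morita equivalence (i.e.\ resolving the map $f$ and replacing $X$ by the dominant source $S$). The key point is that rational Morita equivalence is exactly the flexibility needed to replace $X$ by whatever birational model makes $\zeta$ a morphism and makes the foliation relative to $B$. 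Along these fibers the Poisson connection restricts to the relative tangent sheaf $T_{Y|B}$, and the Poisson module data $\nabla$ decomposes into a genuine connection part $D_0$ along $T_{Y|B}$ and a residual co-Higgs field $\phi$ valued in $T_{Y|B}\otimes\End(E_0)$. Verifying $D_0(\phi)=0$ is a compatibility condition that follows from the integrability of $\nabla$ as a Poisson module (the Poisson-module flatness axiom), once everything is restricted to the relative setting; this gives case (d).

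\medskip

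The main obstacle I anticipate is controlling the meromorphic extensions across the singular loci while preserving the Poisson-module axioms. Both the passage to the minimal/relative model and the restriction to $T_{Y|B}$ naturally introduce poles along the degeneracy divisor of $\sigma$ and along the exceptional and indeterminacy loci of $f,h,\zeta$; one must check that the connection $\nabla$, which a priori is only defined off these loci, extends as a \emph{meromorphic} flat connection (resp.\ meromorphic co-Higgs field) with controlled pole order. This is where the klt and canonical-singularity hypotheses do the heavy lifting: they bound the discrepancies and guarantee that the relevant sheaves ($T_X$, $\Og^1_X$, and their reflexive hulls) behave well enough that the Poisson tensor and its contraction with $\nabla$ extend across codimension-two strata by reflexivity (Hartogs-type extension for reflexive sheaves on normal varieties). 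I would therefore organize the proof so that the foliation-theoretic trichotomy is established first on the smooth symplectic locus, and only afterward extend each resulting structure to the whole (mildly singular) model, using normality and reflexivity to absorb the correction terms and to show that the constructed maps $f,h$ are genuinely Poisson morphisms witnessing the rational Morita equivalence.
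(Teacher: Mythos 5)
Your proposal misses the paper's key structural input and, in the crucial case (d), gets the geometry backwards. The paper does not obtain the rational Morita equivalence by ``passing to a birational model where $\zeta$ becomes a morphism''; it invokes Druel's structure theorem \cite[Proposition 8.14]{Druel18}, which applies precisely because the klt Poisson hypothesis gives $K_{\F_{\sigma}}\simeq\O_X$ together with canonical singularities of $\F_{\sigma}$. That theorem produces a quasi-\'etale cover $f\colon Z=W\times Y\to X$ and a \emph{transcendental} foliation $\sK$ on $Y$ with $\pi_2^{-1}\sK=f^{-1}\F_{\sigma}$; lifting $\sigma$ and $\nabla$ through the quasi-\'etale $f$ and pushing forward by $\pi_2$ is what furnishes the equivalence $(X,\sigma)\leftarrow(Z,\tilde\sigma)\rightarrow(Y,\sigma_2)$. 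So the klt/canonical hypotheses enter through this splitting theorem, not through discrepancy bounds or Hartogs-type extension of reflexive sheaves, which is where your proposal places the ``heavy lifting.'' Relatedly, your case analysis never produces alternative (a): the paper's dichotomy is that either $\nabla_0$ factors through $T\F_{\sigma_2}$, in which case the anchor isomorphism $T\F_{\sigma_2}^{*}\simeq T\F_{\sigma_2}$ (valid off a codimension-two set) turns it into a flat partial holomorphic structure along the transcendental foliation --- that is case (a) --- or else $\nabla_0$ induces a nontrivial residue $\phi_0\in H^0(Y,N\sK\otimes\End(E_0))$ with $\phi_0\wedge\phi_0=0$, extracted from the flatness equation $\delta(\theta)+\theta\wedge\theta=0$ following Wang.

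The step that would actually fail is your construction of $\zeta\colon Y\dashrightarrow B$. You describe its fibers as ``the minimal positive-dimensional algebraic subvarieties tangent to the foliation,'' i.e.\ an algebraic reduction; but in the transcendental case such subvarieties do not exist through a general point --- that is the definition of transcendental --- and the dimension count $\dim(B)=\dim(\F_{\sigma_0})$ is incompatible with fibers tangent to the foliation. In the paper $\zeta$ is the opposite object: a fibration \emph{generically transversal} to $\sK$, built by composing a generic projection $Y\to\mathbb{P}^n$ with a generic linear projection $\mathbb{P}^n\dashrightarrow\mathbb{P}^k$ ($k=\dim\sK$) and taking a Stein factorization. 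Transversality gives an isomorphism $T_{Y|B}\simeq N\sK$ off the tangency locus $S$, which is exactly what converts the normal residue $\phi_0$ into a meromorphic co-Higgs field tangent to $T_{Y|B}$ with poles on $S$, and Wang's result \cite[Corollary 3.3]{Wang} (combining the Bott partial connection on $N\sK$ with $\nabla_0$) is what produces the connection $D_0$ on $T_{Y|B}\otimes\End(E_0)$ with $D_0(\phi)=0$; this does not follow merely from ``restricting the Poisson-module flatness axiom to the relative setting.'' With your tangent-fiber reading of $\zeta$, none of these identifications is available, so case (d) cannot be established along the lines you propose.
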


As an  application of Theorem \ref{teo-mori}, we  provide a structure theorem for  rank two holomorphic  $\mathfrak{sl}_2$-Poisson modules.

\begin{cor} \label{aplic}
Let  $(E,\nabla)$ be a  rank two  holomorphic  $\mathfrak{sl}_2$-Poisson   module  on a  klt Poisson  projective  variety $(X,\sigma)$.  
 Then there exist   projective
varieties $Y$ and $Z$ with klt singularities, a 
quasi-\'etale  Poisson cover $f: W\times Y \to  X$ and  at least one of the following holds.  
\begin{itemize}
\item[(a)]    $(\pi_2)_*f^*(E,\nabla)$  is a  $\mathfrak{sl}_2$ partial   holomorphic  sheaf on $Y$, where $\pi_2$ denotes the projection on $Y$. 

\item[(b)] If $W$ and $Y$ are  generically symplectic, then $(\pi_2)_*f^*(E,\nabla)$ is a rank two  locally free sheaf 
   with a meromorphic    flat   connection with poles on the degeneracy Poisson divisor of $Y$. 

 \item[(c)]  
 If $W$ is    symplectic, then   after a birational trivialization  of $ f^*(E,\nabla)$ the   Poisson connection on the trivial bundle is defined  as 
$$
\tilde{ \nabla}=\delta_{W}+\begin{pmatrix}f_1 &f_2
 \\f_0&-f_1\end{pmatrix}\otimes v,
$$
for some rational  vector field $v$ tangent to $(Y,0)$,  rational functions $f_0,f_1,f_2 \in K(Y)$, and 
$ \delta_{W}$ denotes the  Poisson differential on $W$.
\item[(d)]    There exists a rational  map  $\zeta: Y  \dashrightarrow B$,  
over a variety $B$ with $\dim(B)= \dim(\F_{\alpha})$,   such that $(\pi_2)_*f^*(E,\nabla)$ 
corresponds to a meromorphic    $\mathfrak{sl}_2$-Poisson 
 module  $(E_0, \tilde{ \nabla}) $, such that after a birational trivialization  the  Poisson connection  on the trivial bundle is defined  as 
$$
\tilde{ \nabla}=\delta+\begin{pmatrix}f_1 &f_2
 \\f_0&-f_1\end{pmatrix}\otimes v,
$$
for some rational Poisson vector field $v$ and    rational functions $f_0,f_1,f_2$ on $X$ such that $\{f_i,f_j\}= 0$, for all $i,j$. 
\end{itemize}
\end{cor}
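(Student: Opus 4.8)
The plan is to derive the corollary from Theorem~\ref{teo-mori} by specializing the four alternatives to rank two, and by upgrading the abstract rational Morita equivalence into a concrete quasi-\'etale cover that splits off the symplectic directions of the foliation $\F_\sigma$. Applying Theorem~\ref{teo-mori} to the rank two module $(E,\nabla)$ places it, up to rational Morita equivalence, in one of the regimes (a)--(d), and the matching with the corollary is the rank two reading of each regime: alternative (a) of the theorem (a flat sheaf on a transcendental variety) yields the $\mathfrak{sl}_2$ partial holomorphic sheaf of (a); alternative (b) (a meromorphic flat connection on a generically symplectic variety) yields the meromorphic flat connection of (b), with poles supported on the degeneracy Poisson divisor; alternative (c) (a co-Higgs sheaf on a variety with trivial Poisson structure) yields the symplectic case (c); and alternative (d) (a meromorphic co-Higgs sheaf on a transcendental variety, with $D_0(\phi)=0$ and tangency to $T_{Y|B}$) yields the final case (d).

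Next I would realize the rational Morita equivalence geometrically. Starting from the correspondence variety $(S,\varrho)$ supplied by the equivalence, I would use a singular decomposition theorem for klt Poisson varieties to split, after a quasi-\'etale cover, the symplectic part of $\F_\sigma$ off from its transverse part. This produces the quasi-\'etale Poisson cover $f\colon W\times Y\to X$ in which the factor $W$ carries the (generically) symplectic directions and $Y$ is the transverse, transcendental factor; the second klt variety $Z$ arises from algebraically integrating the remaining sub-foliation, that is, as the base of the realization appearing in the Morita diagram. Pushing the pulled-back module forward along the projection $\pi_2$ produces $(\pi_2)_* f^*(E,\nabla)$ on $Y$, and the pair $(f,\pi_2)$ provides the dominant Poisson morphisms demanded by the definition of rational Morita equivalence, so that the output of the theorem is transported to an honest Poisson module on $Y$.

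Then I would use the rank two $\mathfrak{sl}_2$-structure to extract the explicit normal forms of (c) and (d). Over the function field $K(Y)$ a rank two locally free sheaf is birationally trivial, so after a birational trivialization of $f^*(E,\nabla)$ the Poisson connection takes the form $\delta+A\otimes v$ with $A$ a matrix of rational functions and $v$ a rational vector field; the $\mathfrak{sl}_2$-condition forces $A$ to be trace-free, so that
$$
A=\begin{pmatrix}f_1 & f_2\\ f_0 & -f_1\end{pmatrix}.
$$
In the symplectic case (c) the factor $W$ contributes the Poisson differential $\delta_W$ and $v$ becomes tangent to $(Y,0)$, while in the transcendental case (d) the conditions $D_0(\phi)=0$ and the tangency of the co-Higgs field to $T_{Y|B}$ furnished by Theorem~\ref{teo-mori} translate, through the Poisson compatibility of $\tilde\nabla$, into the commuting relations $\{f_i,f_j\}=0$ for all $i,j$; the algebraic integrability of the corresponding distribution then yields the fibration $\zeta\colon Y\dashrightarrow B$ with $\dim B=\dim\F_{\alpha}$.

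The step I expect to be the main obstacle is the geometric realization: producing a quasi-\'etale cover with product structure $W\times Y$ that is simultaneously compatible with the Poisson bivector, keeps both $Y$ and $Z$ klt, and realizes the abstract Morita correspondence by genuine dominant Poisson morphisms. In particular one must check that the pushforward $(\pi_2)_*$ preserves not merely the underlying sheaf but the full Poisson-module structure. Once this decomposition is in place, the remaining normal-form statements reduce to linear algebra over $K(Y)$ constrained by the flatness and Poisson-commutation conditions.
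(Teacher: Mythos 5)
Your skeleton matches the paper's: the cover $f\colon W\times Y\to X$ indeed comes from the structure theorem for the symplectic foliation $\F_\sigma$ (it is produced inside the proof of Theorem~\ref{teo-mori} via Druel's result, using $K_{\F_\sigma}\simeq\O_X$ and canonical singularities, and $Z$ is just the product $W\times Y$ itself, not the base of some further integration), and the four cases are then read off from Theorem~\ref{teo-mori}. However, the two steps you dismiss as ``linear algebra over $K(Y)$'' are precisely where the content of cases (c) and (d) lies, and as written both are gaps. First, a birational trivialization of $f^*(E,\nabla)$ does \emph{not} by itself produce a connection of the form $\delta+A\otimes v$ with $A$ a matrix of rational \emph{functions} and a single rational vector field $v$: it produces $\delta+\Theta$ with $\Theta=\begin{pmatrix}v_1&v_2\\ v_0&-v_1\end{pmatrix}$ a trace-free matrix of rational \emph{vector fields}. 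The factorization $\Theta=A\otimes v$ is exactly the content of Proposition~\ref{proposition-co-higgs}: flatness forces $v_i\wedge v_j=0$ for all $i,j$, hence $v_1=f\,v_0$ and $v_2=g\,v_0$ over $K(Y)$, and only then does a matrix of functions times one vector field appear. Trace-freeness ($\mathfrak{sl}_2$) gives the shape of $A$ but has nothing to do with the factorization, which is the nontrivial claim.

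Second, in case (d) you assert that $D_0(\phi)=0$ and the tangency to $T_{Y|B}$ ``translate, through the Poisson compatibility of $\tilde\nabla$, into $\{f_i,f_j\}=0$''. That translation is the theorem, and it is not formal. The paper's route is: Remark~\ref{conn-Higgs} converts $D_0(\phi)=0$ into $\delta(\phi)=0$ and $\phi\wedge\phi=0$; the wedge conditions give the proportionalities $v_1=f_1v_0$, $v_2=f_2v_0$, $v_1=f_0v_2$ (these ratios are the functions $f_i$ of the statement); applying $\delta$ to $v_2=f_2v_0$ and using $\delta(v_i)=0$ yields $\delta(f_2)\wedge v_0=0$, i.e.\ $v_0=h_2\,\delta(f_2)$ for some invertible rational function $h_2$, and likewise $v_0=h_1\,\delta(f_1)$, $v_1=h_0\,\delta(f_0)$ --- each $v_i$ is a rational multiple of a Hamiltonian vector field of one of the $f_j$. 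Only then do the brackets vanish, by direct substitution: $\{f_1,f_2\}=\delta(f_1)(f_2)=\tfrac{h_2}{h_1}\,\delta(f_2)(f_2)=\tfrac{h_2}{h_1}\,\{f_2,f_2\}=0$, and similarly for $\{f_0,f_1\}$ and $\{f_0,f_2\}$. Nothing in your proposal supplies the step from $\delta(\phi)=0$ to the Hamiltonian form of the $v_i$, and without it ``Poisson compatibility'' gives no commutation relations at all; note also that the fibration $\zeta\colon Y\dashrightarrow B$ is furnished by Theorem~\ref{teo-mori}(d), not deduced afterwards from $\{f_i,f_j\}=0$ as your last sentence suggests.
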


In the  section \ref{section-sl2} we  point out   that the   study  of  rank two  $\mathfrak{sl}_2$-Poisson modules is equivalent to  the understanding of the following objects:
\begin{itemize}
\item[i) ] a triple of rational vector fields $(v_0,v_1,v_2)$  on $X$ such that 
\begin{equation}\label{MCintro}
 \begin{matrix}
\delta(v_0)=\hfill v_0\wedge v_1\\
\delta(v_1)=2v_0\wedge v_2\\
\delta(v_2)=\hfill v_1\wedge v_2,
\end{matrix} 
\end{equation}

\item[ii) ]  the symplectic foliation  $\F_{ \nabla}$  corresponding to  {\it Polishchuk's Poisson structure}  induced by $\nabla$  on  $\pi:\mathbb{P}(E,\nabla)\to (X,\sigma)$. 

\end{itemize}

We  can say  that the study of  the triples $(v_0,v_1,v_2)$  
satisfying (\ref{MCintro}) is the Poisson    analogous   to   transversely projective holomorphic foliations theory due to B. Sc\'ardua \cite{S}.
We refer to the works  \cite{ Cousin, CP,  LPT2},  where  the authors have studied transversely projective   foliations via meromorphic connections on rank two vector bundles \cite{D}. 

From Corollary \ref{aplic} we observe  that the  geometric study of  the symplectic foliation  $\F_{ \nabla}$  reduces, up to a 
quasi-\'etale  Poisson cover,   to the   foliation $\F_{ \nabla_0}$  on    $  \mathbb{P}(E_0,  \nabla_0) \to (Y, \sigma_0)$.  Our next result describes the geometry of such foliation.

\begin{cor} Under the same assumptions as Corollary \ref{aplic}. Let $\F_{ \nabla_0}$ be the symplectic foliation induced on    $  \mathbb{P}(E_0) \to (Y, \sigma_0)$.  
Then   at least one of the following holds.

\begin{itemize}
\item[(a)] 

$\F_{ \nabla_0}$   is a  dimension 2 foliation   which is a pull-back of a foliation  by curves on $(Y,0)$.  

\item[(b)]  
 $ \F_{ \nabla_0}$  is a Riccati  foliation of codimension  one on     $ \mathbb{P}(E_0)$, if  $ (Y, \sigma_0)$ is generically symplectic.     

\item[(c)]  
$\F_{ \nabla_0}$  is a  Riccati  foliation of codimension  one on   $ \mathbb{P}(E_0)$ which  is given by  a morphism  $\sA \to  d_{\text{\rm refl}} \pi(  \pi^{*}(T\F_{ \sigma_0}^*))\subset \Omega_{\mathbb{P}(E_0)}^{[1]}$, where $\sA$ is  a line bundle and  $d_{\text{\rm refl}} \pi: \pi^{*}\Omega_Y^{[1]} \to \Omega_{ \mathbb{P}(E_0) }^{[1]} $  is the  pull-back morphism of reflexive forms.

\item[(d)]   There exist a rational Poisson vector field $v$ generically transversal to $\F_{\sigma_0}$  such that  $\F_{ \nabla_0}$     has   dimension  $2k+2$ and it is the  pull-back of the foliation induced by $v$ and $\F_{\sigma_0}$.  In particular, if $\dim(Y)=2k+1$, then  $ \mathbb{P}(E_0)$ is generically symplectic and there exist a rational Poisson map $\zeta:  Y  \dashrightarrow  B$ generically transversal to  $\F_{\sigma_0}$, where $B$ is a generically symplectic variety with  $\dim(B)=2k$ and  the induced map  $ \mathbb{P}(E_0) \dashrightarrow  B$ is  Poisson. 
  
\end{itemize}

\end{cor}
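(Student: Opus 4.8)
The plan is to read $\F_{\nabla_0}$ directly off Polishchuk's Poisson structure on $\mathbb{P}(E_0)$ and then to run through the alternatives produced by Corollary~\ref{aplic}. Recall that the module structure $\nabla_0$ is a flat action of the cotangent Lie algebroid $(T^{*}Y,\{\,,\,\}_{\sigma_0})$ on $E_0$; this action lifts to $\mathbb{P}(E_0)$, and $\F_{\nabla_0}$ is exactly its orbit foliation, so that $\pi\colon\mathbb{P}(E_0)\to Y$ carries $\F_{\nabla_0}$ onto $\F_{\sigma_0}$. The first step is the pointwise description of $T\F_{\nabla_0}$: at a point $([e],y)$ it is spanned by the vectors $(\sigma_0^{\#}(\alpha),\,\widehat{\nabla_{0,\alpha}}|_{[e]})$ for $\alpha\in T^{*}_yY$, where $\widehat{\nabla_{0,\alpha}}$ is the fundamental vertical field on the $\mathbb{P}^1$--fibre induced by the endomorphism $\nabla_{0,\alpha}$, enlarged by the co-Higgs directions when $\sigma_0$ is degenerate. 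Consequently $T\F_{\nabla_0}$ surjects onto $T\F_{\sigma_0}$, and since the fibre is a $\mathbb{P}^1$ its vertical part is at most one-dimensional. The whole argument then reduces to deciding, in each alternative, whether this vertical part vanishes---so that $\F_{\nabla_0}$ is transverse to the fibres and \emph{Riccati}---or is nonzero---so that $\F_{\nabla_0}$ contains the fibres and is a \emph{pull-back}.

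In the flat regime, that is, when $(E_0,\nabla_0)$ is a (possibly meromorphic) flat connection along $\F_{\sigma_0}$ coming from the generically symplectic and the transcendental-flat alternatives of Theorem~\ref{teo-mori}, the horizontal distribution determined by the connection is transverse to the fibres and, by flatness---equivalently, by the Maurer--Cartan equations (\ref{MCintro})---integrable. Its leaves are the suspensions of the monodromy of the $\mathfrak{sl}_2$--connection over the leaves of $\F_{\sigma_0}$, so $\F_{\nabla_0}$ is a Riccati foliation. To obtain codimension one I would write down its defining twisted form: the connection one-form is a section of $\pi^{*}T\F_{\sigma_0}^{*}\otimes\sA$ for a line bundle $\sA$, and the reflexive pull-back $d_{\text{\rm refl}}\pi\colon\pi^{*}\Omega_Y^{[1]}\to\Omega_{\mathbb{P}(E_0)}^{[1]}$ turns it into the morphism $\sA\to d_{\text{\rm refl}}\pi(\pi^{*}(T\F_{\sigma_0}^{*}))\subset\Omega_{\mathbb{P}(E_0)}^{[1]}$ of statement (c). When $Y$ is generically symplectic one has $T\F_{\sigma_0}=T_Y$ and this is the classical Riccati foliation of statement (b).

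In the co-Higgs regime, when the $\mathfrak{sl}_2$ datum contributes a genuine Higgs term $A\otimes v$ with $A=\left(\begin{smallmatrix}f_1&f_2\\f_0&-f_1\end{smallmatrix}\right)$ and $v$ transverse to $\F_{\sigma_0}$ (the trivial-Poisson and transcendental co-Higgs alternatives), the corresponding Hamiltonian vector field on $\mathbb{P}(E_0)$ has a nonzero vertical component $\xi$, namely the $\mathfrak{sl}_2$--action of $A$ on the $\mathbb{P}^1$--fibre. As the fibre is a line, $\xi$ spans the whole vertical tangent away from the two sections fixed by $A$, so $\F_{\nabla_0}$ contains the fibres and equals the pull-back $\pi^{*}\G$ of the foliation $\G$ on $Y$ generated by $\F_{\sigma_0}$ and $v$. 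Over a trivial Poisson $(Y,0)$ (Corollary~\ref{aplic}(c)), $\G$ is the foliation by integral curves of $v$ and $\dim\F_{\nabla_0}=2$, which is (a). In the transcendental case (Corollary~\ref{aplic}(d)) the condition $\{f_i,f_j\}=0$ makes $v$ a Poisson vector field transverse to $\F_{\sigma_0}$, $\G$ is generated by the rank $2k$ foliation $\F_{\sigma_0}$ together with $v$, and $\dim\F_{\nabla_0}=2k+2$; when $\dim Y=2k+1$ the generic leaf is open, $\mathbb{P}(E_0)$ is generically symplectic, and $\zeta\colon Y\dashrightarrow B$ lifts to the Poisson map $\mathbb{P}(E_0)\dashrightarrow B$, which is (d).

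The principal difficulty is the transcendental co-Higgs case (d). There one must promote the infinitesimal data---the transverse Poisson vector field $v$, the relative tangent $T_{Y|B}$ of $\zeta$, and the flatness $D_0(\phi)=0$ of the meromorphic co-Higgs field---into the global conclusions that $B$ is generically symplectic and that the induced rational map $\mathbb{P}(E_0)\dashrightarrow B$ is Poisson. This requires controlling the polar and degeneracy divisors of the meromorphic connection and arguing reflexively over the klt variety $Y$: the vertical/horizontal computation above is literal on the smooth locus of $(Y,\sigma_0)$ away from these divisors, and the statements must be propagated across the singular and polar loci using that the reflexive sheaves and the foliations in play are determined in codimension one. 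Once this codimension-one control is secured, cases (a)--(c) follow formally from the vertical/horizontal dichotomy established at the outset.
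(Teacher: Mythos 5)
Your overall route is the same as the paper's: you read $\F_{\nabla_0}$ off Polishchuk's Poisson structure on $\mathbb{P}(E_0)$ and run the case division supplied by Corollary~\ref{aplic}. Your vertical/horizontal dichotomy is an invariant restatement of the paper's coordinate computation with the bivector $\Sigma_{\sigma_0}=\sigma_0+(v_0+2v_1z+v_2z^2)\wedge\frac{\partial}{\partial z}$; cases (a)--(c) come out the same way (for (c) the paper writes the Riccati form $dz+\omega_0+2\omega_1z+\omega_2z^2$ with $\omega_i$ meromorphic sections of $\pi^{*}(T\F_{\sigma_0}^{*})$ and invokes \cite{GKKP11} for $d_{\text{\rm refl}}\pi$, which is exactly your connection-one-form argument), and your claim that in the co-Higgs regime the foliation is the pull-back of the foliation generated by $\F_{\sigma_0}$ and $v$ is what the paper extracts from the wedge-power identity $\Sigma_{\sigma_0}^{k+1}=(k+1)\,\ell\,\sigma_0^{k}\wedge v\wedge\frac{\partial}{\partial z}\neq 0$.

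The gap is in case (d), and you name it yourself but do not close it. The conclusions that $B$ is generically symplectic and that $\mathbb{P}(E_0)\dashrightarrow B$ is Poisson do not follow from ``codimension-one control of polar and degeneracy divisors''; that is not the operative mechanism. What closes the case in the paper is a single observation you never use: $v$ is a \emph{Poisson} vector field, $\delta(v)=L_v\sigma_0=0$, and when $\dim Y=2k+1$ the fibres of $\zeta\colon Y\dashrightarrow B$ are precisely the integral curves of $v$. Hence $\sigma_0$ is projectable along $\zeta$, so $\sigma_B:=\zeta_{*}\sigma_0$ is a well-defined, non-trivial Poisson bivector on $B$; it is generically nondegenerate because $\dim B=2k=\dim\F_{\sigma_0}$ and $v$ is generically transverse to $\F_{\sigma_0}$, so $T\F_{\sigma_0}$ maps isomorphically onto $T_B$ at a general point. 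With $\sigma_B$ in hand, $\zeta$ is Poisson by construction, and the composite $\mathbb{P}(E_0)\to Y\dashrightarrow B$ is Poisson since $\pi_{*}\Sigma_{\sigma_0}=\sigma_0$. Without this projectability argument the last sentence of (d) is asserted, not proved. A smaller internal inconsistency: your opening claim that $\pi$ carries $\F_{\nabla_0}$ onto $\F_{\sigma_0}$ contradicts your later, correct, statement that in the co-Higgs regime $d\pi(T\F_{\nabla_0})=T\F_{\sigma_0}+\langle v\rangle$; only the latter is right, because the anchor image of $dz$ contains the horizontal component $-\ell(z)\,v$, not just vertical directions.
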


\subsection*{Acknowledgments}
 I would like to express my deep gratitude to Nigel Hitchin for  
 his generosity and  patience. I am  honoured for the opportunity  of  having   
many illuminating discussions with him during my visit to the  University of Oxford. 
I would also like to thank Marcos Jardim, Alex Massarenti, Henrique Bursztyn,  Steven Rayan and  Brent Pym for useful discussions. I  would like to thank   the referee for precious comments  which improved the presentation of the paper greatly.
I am   grateful to   Mathematical Institute of the University of Oxford and L'Universit\`a degli Studi di Ferrara
for their hospitality. 
 This work was   partially funded by CNPQ grant numbers 202374/2018-1, 302075/2015-1, 400821/2016-8 ,   and  CAPES  grant number 	
2888/2013.

\section{Holomorphic foliations}
Throughout this  paper a  variety $X$ is  a scheme of finite type over $\mathbb{C}$ and  regular in codimension one, with its smooth locus denoted by  $X_{reg}$. 
As usual,  consider  $\O_X$ the sheaf of germs of  holomorphic functions on $X$. We  denote by  $TX=\mathcal{H}om(\Omega_X^1,\O_X)$ the tangent sheaf of $X$. 
Given   $p\in \mathbb{N}$, we   denote by $\Omega_X^{[p]}$
  the sheaf $(\Omega_X^p)^{**}$.  

Let $X$ be a normal variety and suppose that   $ K_X$  is  $\mathbb{Q}$-Cartier, i.e., some
nonzero multiple of it is a Cartier divisor. 
Consider a resolution of singularities $f:Z\to X$. There are uniquely defined  rational  numbers $a(E_i, X)$'s 
such that
$$K_{Z}=f^{*}K_{X} +\sum_{i} a(E_i, X) E_i,$$ 
where $E_i$ runs through all exceptional prime divisors for $f$. The   $a(E_i, X)$'s do not depend on the   resolution $f$, but only on the valuations
associated to the $E_i$. For more details we refer  to  \cite[Section 2.3]{KM}.

We say that $X$ is \emph{  Kawamata log terminal (klt) }  
 if  all $a(E_i, X )> -1$ for every  $f$-exceptional prime divisor $E_i$, 
 for  some resolution $f:Z\to X$.

\begin{defi}
A \emph{foliation}  $\F$ on  a normal variety   $X$ is a coherent subsheaf $T\F\subset TX$ such that
 $\F$ is closed under the Lie bracket, and
the dimension  of $\F$ is the generic rank of $T\F$. The \textit{canonical bundle }  is defined by $K_{\F} = \det(T\F)^{**}$. 
\end{defi}
We will denote by $a: T\F \to TX$ an injective morphism (anchor map)  defining the foliation  $\F$. 

The singular set of $\F$ is defined by $Sing(\F)=Sing(N\F)$, where $N\F=T_X/T\F$ is the normal sheaf of the foliation. 
Hereafter we  will suppose that $cod(Sing(\F))\geq 2$.
We have a exact sequence of sheaves 
$$
0\to T\F \to TX \to  N\F\to 0. 
$$
\begin{defi}\cite{Mc1}
Let $\F$ be a holomorphic foliation on a projective variety  $X$ and 
 $f:Y\to X$   a projective birational morphism. 
We  say that $\F$ has  \emph{canonical}  singularities if 
the divisor 
$K_{f^{-1}\F}- f^{*}K_{\F} $ 
is effective.
\end{defi}

\section{Poisson modules}

A Poisson structure
on  a variety $X$ is a  $\mathbb{C}$-linear Lie bracket $$\{ \cdot , \cdot \}: \O_X  \times  \O_X  \to \O_X$$ 
which  satisfies the Leibniz rule  $\{ f , gh \} = h \{f,g \} + f\{ g,h\}$  
and   Jacobi identities
$$\{f, \{g, h\}\} + \{g, \{h, f\}\} + \{h, \{f, g\}\} = 0$$
for all germs of holomorphic functions $f,g,h$.   The bracket
corresponds  to a bivector field   $\sigma\in H^0(X, \wedge^2TX)$   given by
$\sigma(df \wedge  dg)= \{f, g\}$,  for all germs of holomorphic functions $f,g$. 

We will denote a Poisson structure on $X$ as the pair $(X, \sigma)$, where $\sigma\in H^0(X, \wedge^2TX)$ is the corresponding  Poisson bivector field.  
The bivector induces a morphism
$$
\sigma^{\#}: \Omega_X^1\to TX
$$
which is called  the {\it anchor map}   and it is  defined by $\sigma^{\#}(\theta)=\sigma(\theta,\cdot)$, where $\theta$ is a germ of holomorphic $1$-form.
\begin{defi}
The {\it symplectic foliation}  associated to $\sigma$ is the foliation given by $\F_{\sigma}:= \mathrm{Ker}(\sigma^{\#})$, whose dimension is the rank the anchor map 
$\sigma^{\#}: \Omega_X^1\to TX$. 
A Poisson variety   $(X,\sigma)$  is called \emph{generically  symplectic}      if the anchor map   $\sigma^{\#}: \Omega_X^1\to TX$ is generically an isomorphism. 
Then, the degeneracy loci of  $\sigma^{\#}$ is an effective anti-canonical  divisor $D(\sigma)\in |-K_X|$.
\end{defi}

A meromorphic Poisson bivector  is a meromorphic section $\sigma$ of $ \wedge^2TX$ such that $[\sigma,\sigma]=0$, where $[\ ,\ ]$  denotes the Schouten bracket. 
Observe that in this case  we have a Poisson structure outside of  the poles divisor of $\sigma$.

A {\it rational vector field}   is a section  $v\in H^0(X, TX\otimes \sL)$ for some invertible sheaf $\sL$. We say that 
 $v$ is a  \emph{Poisson}  rational vector field with respect to $\sigma$ if it is an infinitesimal
symmetry of  $\sigma$, ie.,  $L_v(\sigma)=0$, where $L_v(\cdot)$ denotes the Lie derivative.

We denote by $\delta $ the corresponding Poisson differential.  For instance, $\delta (f)=- \sigma^{\#}(df)$ and  $\delta (v)= L_v(\sigma)$, for all germs of holomorphic function $f$ and germs of vector field $v$.

\begin{defi}
 We say that a Poisson variety  $(X,\sigma)$  is {\it  klt }    if $X$ is a  klt  variety   and the Poisson structure $\sigma$ is  either   generically symplectic   or  the associated  symplectic foliation  $\F_{\sigma}$ 
has canonical  singularities.
\end{defi}

\begin{defi} \cite{Atiyah}
Let $ X $ be a  projective variety.  A {\it holomorphic  connection}  on a 
 sheaf of $\O_X$-modules $E$ is  a 
$\mathbb{C}$-linear morphism of sheaves 
  $\nabla:E\to  \Omega_X^1\otimes E  $ satisfying the Leibniz rule
  $$
  \nabla(fs)= d (f)\otimes s +f \nabla(s),
  $$
 where  $f $ is  germ of holomorphic function on $X$ and $s$ is a germ of holomorphic section   of $E$. For a   holomorphic connection $\nabla$, one defines as usual its extension(curvature) $\nabla^2:E\to  \Omega_X^2\otimes E  $. We say that  $\nabla$ is flat if $\nabla^2=0$. 
We say that $E $ is 
 a {\it flat holomorphic  sheaf}  if it admits a   flat holomorphic  connection. 
   \end{defi}

\begin{defi}
Let $(X,\sigma)$ be a  Poisson projective variety.  A {\it Poisson connection}  on a 
 sheaf of $\O_X$-modules $E$ is  a 
$\mathbb{C}$-linear morphism of sheaves 
  $\nabla:E\to  T_X\otimes E  $ satisfying the Leibniz rule
  $$
  \nabla(fs)= \delta (f)\otimes s +f \nabla(s),
  $$
 where  $f $ is a germ of holomorphic function on $X$ and $s$ is a germ of holomorphic section of $E$. We say that $E$ is a \emph{Poisson module}  if  it admits a Poisson flat connection, i.e, if  its curvature  $\nabla^2:E\to  \Omega_X^2\otimes E  $ vanishes.
  Equivalently, a Poisson connection defines a  $\mathbb{C}$-linear bracket   $\{ \ ,\ \} : \O_{X} \times E  \to   E$ by
  $$
  \{ f,  s\}:=   \nabla(s)(df), 
  $$
  where  $f $ is a germ of holomorphic function on $X$ and $s$ is a germ of holomorphic section   of $E$. \end{defi}

\begin{defi}
Let $a:\sG \to TX$ be a holomorphic foliation.   A  \emph{ holomorphic partial connection}  on a 
 sheaf of $\O_X$-module $E$ is  a 
$\mathbb{C}$-linear morphism of sheaves 
  $\nabla:E\to   \sG^* \otimes E  $ satisfying the Leibniz rule
  $$
  \nabla(fs)= a^*(df)\otimes s +f \nabla(s),
  $$
 where  $f $ is a germ of holomorphic function on $X$,  $s$ is a germ of holomorphic section   of $E$   and  $a^*:\Omega_X^{[1]}\to  \sG^* $ denotes the dual map. If $E$ admits a flat holomorphic partial connection with respect a foliation $\sG$ we say that 
 $E$ is a  flat holomorphic  sheaf along $\sG$.   
\end{defi}

\begin{defi}\cite{Rayan2, H}
A \emph{co-Higgs sheaf}  on a  variety  $X$ is a sheaf $E$ together with a section 
$\phi \in H^0(X, TX\otimes End(E))$ (called
a co-Higgs fields) for which $\phi\wedge  \phi=0$. If $E$ is a locally free sheaf we say that it is a \emph{co-Higgs bundle}.

\end{defi}

\begin{defi}\cite[ section 2]{BGHR}
Let $G$ be a connected, reductive, affine algebraic group defined over $\mathbb{C}$.
A holomorphic principal $G$-bundle $E_G$ is a 
\emph{$G$-co-Higgs} bundle if there exist  a holomorphic section  $\phi$ of $ \mbox{ad}(E_G)\otimes TX$, where   $\mbox{ad}(E_G)$ denotes the adjoint vector bundle associated to $E_G$. If $\phi$ is a meromorphic section of $ \mbox{ad}(E_G)\otimes TX$ we say that $E_G$ is a meromorphic  $G$-co-Higgs bundle.  
\end{defi}
In this work we are interested in the case where $G=\mbox{SL}(2,\mathbb{C})$,  see section \ref{section-sl2}.  

\begin{exe}\label{exe-sym}
Let  $(X,\sigma)$ be a   generically  symplectic  variety of dimension $2n$  with   degeneracy divisor $D(\sigma)=D$.  It follows from \cite[Proposition 4.4.1]{Pym} that $\sigma$  
induces a skewsymmetric morphism 
$$
\sigma^{\#}:\Omega^1_X(\log D) \to T_X(-\log D)
$$ 
which is an isomorphism if and only if $D$ is reduced  \cite[Proposition 4.4.2]{Pym}. 
Therefore, if $D$ is reduced then  the isomorphism $\sigma^{\#}$ gives a   one-to-one correspondence  between Poisson flat connections  and logarithmic   flat   connections . 
If  $D$ is not  reduced, then a Poisson connection corresponds to a meromorphic flat connection with poles along $D$. 
 
\end{exe}

\begin{exe}
Let  $(X,\sigma)$ be a   Poisson   variety which is not generically  symplectic.   If  $E $ admites   a holomorphic flat connection    $\nabla:E \to  \Omega_X^1\otimes E $. Then
 $$\sigma^{\#} \circ \nabla: E \to  \Omega_X^1 \otimes E \to  T\F_{\sigma}\otimes E $$
 is a Poisson flat connection on $E$ tangent to the symplectic foliation $  \F_{\sigma}$. 
\end{exe}

\begin{exe}
Let  $(X,\sigma)$ be a   Poisson  projective variety whose the leaves of the  symplectic foliation $\F_{\sigma}$  are  the fibers of a rational map $\rho:X  \dashrightarrow  Y$. 
If  $\nabla:E \to \Omega_Y^1 \otimes E  $ is a holomorphic  flat connection, then $\rho^*E$  is a Poisson module on $X$, with
Poisson connection given by
$$
 \omega \circ \rho^*\nabla: \rho^*E \to  \Omega^1_{Y|X} \otimes  \rho^*E \to   T_{Y|X} \otimes  \rho^*E,
$$
where $ \omega:  \Omega^1_{Y|X} \to    T_{Y|X}  $ denotes the induced isomorphism given by $(\sigma^{\#}) _{|T\F_{\sigma}}$. 
\end{exe}

\begin{exe}
Let  $(X,0) $  be a     projective  variety with the trivial Poisson structure. 
Let $(E,\phi)$ be a  co-Higgs  bundle  on $X$. 
Then $ E$ is a Poisson module on $X$
whose Poisson connection is given by  $\nabla(fs):= \phi(df) s$, where  $f $ is a germ of holomorphic function on $X$,  $s$ is a germ of holomorphic section   of $E$.   

\end{exe}

\begin{exe}
Let  $(X,\sigma)$ be a   Poisson  projective manifold  and  $\rho:X   \dashrightarrow  Y$ a  rational map with connected fibers  such that $\rho_*\sigma=0$, then 
$\rho_*E$ is a co-Higgs sheaf on $Y$. In general, $\rho_*E$ is a Poisson module   on $(Y,\rho_*\sigma)$, since  $f_*\O_X  \simeq \O_Y$. 
See in the next section Proposition  \ref{Prop-Polishchuk}   due to Polishchuk. 
\end{exe}

\section{Rational Morita equivalence}

Le us recall the    notion of Morita equivalence of real  Poisson manifolds  which was developed
by Weinstein    \cite{W3}  and Xu \cite{X1} and  it  works  verbatim  in the  complex context:
we say that two Poisson  manifolds   $(X,\sigma_1)$ and  $  (Y,\sigma_2) $   are   Morita equivalence  
 if there exists  a  symplectic manifold  $ (S,\varrho)$  and with two arrows 
$$
\xymatrix@R=10pt{
 &(S,\varrho) \ar@{->}[dl]_{f} \ar@{->}[dr]^{h}& \\
(X,\sigma_1)& & (Y,-\sigma_2),}
$$
such that    are  Poisson  submersions. 
This  equivalence  has the following important properties :
\begin{itemize}
\item    there is  a bijection between
the leaves of the sympectic foliations of $(X_1,\sigma_1)$ and $(X_2,\sigma_2)$. 
\\ 
\item 
the space  of  Casimir functions  of  $(X_1,\sigma_1)$ and $(X_2,\sigma_2)$   are  isomorphic.  
\end{itemize}

\begin{defi} 
We say that two Poisson normal projective varieties   $(X,\sigma_1)$ and  $  (Y,\sigma_2) $   are  {\it rationally   Morita equivalence} 
 if there exists  a  projective    normal variety $ (S,\varrho)$, with a (possibly meromorphic ) Poisson bivector $\varrho$, and  two  diagrams 
$$
\xymatrix@R=10pt{
 &(S,\varrho) \ar@{->}[dl]_{f} \ar@{->}[dr]^{h}& \\
(X,\sigma_1)& & (Y,\sigma_2),}
$$
such that  $f$ and $h$ are dominant     Poisson   morphisms, i.e,  $ f_* \varrho= \sigma_1$ and $h_*  \varrho= \sigma_1$.
We say that two  Poisson modules  $E_1 \to (X,\sigma_1)$ and  $E_2  \to (Y,\sigma_2) $   are  {\it  rationally    Morita equivalence}  
if there is an equivalence $(X,\sigma_1)   \stackrel{ f }{ \longleftarrow}  (S,\varrho_{12})    \stackrel{ h }{\longrightarrow}  (Y,\sigma_2) $   such that  
  $ h_*  f^* E_1  $  and     $  E_2$ are  isomorphic  as Poisson modules. 
\end{defi}

We say that
 $$(X_1,\sigma_1)   \stackrel{ f_1 }{ \longleftarrow}  (S,\varrho_{12})    \stackrel{ h_1 }{\longrightarrow}  (X_2,\sigma_2) $$
 is isomorphic to 
 $$(X_3,\sigma_3)   \stackrel{ f_2 }{ \longleftarrow} (Q,\varrho_{34}) \stackrel{ h_2 }{ \longleftarrow}  (X_4,\sigma_4) $$
  if there is a birational map  $\zeta: (S,\varrho_{12})  \longrightarrow  (Q,\varrho_{34})$ such that $\zeta_* \varrho_{12}=\varrho_{34}$.
Therefore, for each  $(X_1,\sigma_1)   \stackrel{ f }{ \longleftarrow}  (S,\varrho_{12})    \stackrel{ h }{\longrightarrow}  (X_2,\sigma_2) $ we can  aways  take the resolution of singularities  $\zeta:\tilde{S}\to S$ and     the lifting  of  the meromorphic Poisson bivector field $\tilde{\varrho_{12}}$ will give us  that  $\zeta_* {\varrho_{12}}=\varrho_{12}$.  Thus, we obtain an equivalence

\centerline{
\xymatrix{
 &  (\tilde{S},\tilde{\varrho_{12}}) \ar[d]_{\zeta}  \ar@/^/@{->}[rdd]   \ar@/_/@{->}[ldd] & \\
 & (S,\varrho_{12}) \ar@{->}[dl]_{f} \ar@{->}[dr]^{g}& \\
(X,\sigma_1)& & (Y,\sigma_2),}
}

\noindent with $\tilde{S}$ smooth. 

  Since $f^{-1}\F_{\sigma_1} = h^{-1}\F_{\sigma_2}$ , we   obtain  the following.

\begin{prop}
The following holds:
\begin{itemize}
\item[1)]   there is  a bijection between
the leaves of the symplectic foliations of $(X_1,\sigma_1)$ and $(X_2,\sigma_2)$, 

\item[2)]  
the spaces  of  Casimir rational functions  of  $(X_1,\sigma_1)$ and $(X_2,\sigma_2)$   are  isomorphic,  
 
\item[3)] Morita equivalence implies the  rational Morita equivalence.  
\end{itemize}
\end{prop}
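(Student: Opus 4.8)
The plan is to prove the three assertions of the Proposition by exploiting the fundamental identity $f^{-1}\F_{\sigma_1} = h^{-1}\F_{\sigma_2}$, which is the geometric heart of the matter and which follows directly from the fact that $f$ and $h$ are dominant Poisson morphisms (so $f_*\varrho = \sigma_1$ and $h_*\varrho = \sigma_2$). As remarked just before the statement, after resolving the singularities of $S$ we may assume $S = \tilde S$ is smooth with a meromorphic Poisson bivector $\varrho$, so all the maps in sight are morphisms of smooth varieties away from a divisor, and the pullback/pushforward operations on foliations and functions are defined on a dense open set. Throughout I would work birationally, which is exactly the setting in which the definition is posed.

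For assertion (1), first I would establish the displayed identity $f^{-1}\F_{\sigma_1} = h^{-1}\F_{\sigma_2}$ carefully. Because $f_*\varrho = \sigma_1$, the anchor $\sigma_1^{\#}$ factors through $f$ in the sense that the leaves of $\F_{\sigma_1}$ pull back under $f$ to (unions of) leaves of $\F_\varrho$; concretely, the symplectic leaves of $\varrho$ on $S$ are saturated for both $f$ and $h$, and $f$ (resp. $h$) maps each $\F_\varrho$-leaf submersively onto an $\F_{\sigma_1}$-leaf (resp. an $\F_{\sigma_2}$-leaf). This is the complex birational analogue of the classical statement for Morita equivalence of real Poisson manifolds recalled earlier in this section. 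Granting this, a leaf $L_1$ of $\F_{\sigma_1}$ corresponds to the $\F_\varrho$-leaf $f^{-1}(L_1)$ (a single leaf, using connectedness of fibres after passing to the Stein factorization / normalization), which in turn maps via $h$ to a single leaf $L_2$ of $\F_{\sigma_2}$. The assignment $L_1 \mapsto L_2$ and its inverse $L_2 \mapsto L_1$ obtained by reversing the roles of $f$ and $h$ give the desired bijection between symplectic leaves.

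For assertion (2), I would use that a rational function $g$ is Casimir for $(X_1,\sigma_1)$ precisely when it is constant on the symplectic leaves, equivalently $\sigma_1^{\#}(dg) = 0$ generically, i.e. $\delta(g) = -\sigma_1^{\#}(dg) = 0$ in the notation of the excerpt. Pulling back along $f$ and using $f_*\varrho = \sigma_1$ shows $f^*g$ is Casimir for $(S,\varrho)$; since $f$ is dominant with $f_*\O_S \simeq \O_{X_1}$ birationally, $f^*$ identifies $K(X_1)$-Casimirs with the $f$-invariant $\varrho$-Casimirs on $S$, and symmetrically for $h$. By the leaf bijection of (1), a $\varrho$-Casimir descends through $f$ if and only if it descends through $h$ (both conditions being equivalent to constancy along $\F_\varrho$-leaves, which are the common fibre-saturations), giving the isomorphism $K(X_1)^{\mathrm{Cas}} \cong K(X_2)^{\mathrm{Cas}}$ of the spaces of Casimir rational functions. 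Assertion (3) is then essentially formal: a genuine (strong) Morita equivalence in the sense recalled at the start of the section provides a symplectic $(S,\varrho)$ with Poisson submersions $f, h$, and since submersions are in particular dominant Poisson morphisms and a symplectic bivector is a fortiori a (meromorphic) Poisson bivector, the same data witnesses a rational Morita equivalence; one need only check that the sign convention $(Y,-\sigma_2)$ versus $(Y,\sigma_2)$ is harmless up to the automorphism $\varrho \mapsto -\varrho$ of the symplectic structure, which preserves the class of equivalences.

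The main obstacle I anticipate is making the identity $f^{-1}\F_{\sigma_1} = h^{-1}\F_{\sigma_2}$ and the attendant leaf correspondence rigorous in the \emph{birational, possibly meromorphic} category: the maps are only dominant (not submersions), $\varrho$ may have poles, and leaves are not closed subvarieties, so \textquotedblleft pullback of a foliation\textquotedblright\ and \textquotedblleft constant on leaves\textquotedblright\ must be interpreted generically and checked to be independent of the chosen representatives and of the resolution $\tilde S \to S$. Controlling the behaviour over the degeneracy/pole divisors and ensuring the fibre-connectedness needed for a genuine bijection (rather than a correspondence) is where the real work lies; everything else reduces to the generic rank computation of the anchor maps and the Leibniz/Casimir bookkeeping already set up in the previous sections.
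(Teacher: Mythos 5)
The paper's own ``proof'' of this Proposition is the single sentence preceding it --- ``Since $f^{-1}\F_{\sigma_1} = h^{-1}\F_{\sigma_2}$, we obtain the following'' --- so in outline your strategy (reduce all three claims to the coincidence of the two pulled-back foliations on a resolved roof $(S,\varrho)$) is the same as the paper's, and your treatment of part (3) is fine as a formal observation.

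The problem lies in the mechanism by which you derive (1) and (2) from that identity. You assert that $f$ maps each $\F_{\varrho}$-leaf submersively onto an $\F_{\sigma_1}$-leaf, that $f^{-1}(L_1)$ is a single $\F_{\varrho}$-leaf, and that $f^*g$ is a Casimir of $(S,\varrho)$ whenever $g$ is a Casimir of $(X_1,\sigma_1)$. All three statements are false for dominant Poisson morphisms, and they fail precisely in the classical situation you cite as motivation, where $S$ is symplectic: there $\F_{\varrho}$ has one leaf per connected component of $S$, and this leaf maps onto all of $X_1$, not onto a single leaf; $f^{-1}(L_1)$ is a proper coisotropic submanifold, hence not a leaf of $\F_{\varrho}$ (nor even $\F_{\varrho}$-saturated); and the only Casimirs of a symplectic structure are constants, so nonconstant Casimirs of $\sigma_1$ never pull back to Casimirs of $\varrho$. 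The root error is the direction of an inclusion: $f$ Poisson means $df\circ\varrho^{\#}\circ f^{*}=\sigma_1^{\#}$, which gives $T\F_{\sigma_1}\subseteq df(T\F_{\varrho})$, not the containment your argument needs. The objects that actually carry the proof are the pullback foliations $f^{-1}\F_{\sigma_1}$ and $h^{-1}\F_{\sigma_2}$, whose leaves are (generically) the connected components of preimages of leaves; these are in general strictly smaller than $\F_{\varrho}$, and you have conflated the two throughout. With that corrected, (1) follows because a connected $f^{-1}(L_1)$ is a single leaf of $f^{-1}\F_{\sigma_1}=h^{-1}\F_{\sigma_2}$ and hence equals $h^{-1}(L_2)$ for a unique leaf $L_2$, and (2) follows by identifying Casimir rational functions with rational first integrals of the symplectic foliation and matching first integrals of the common pullback foliation through $f^*$ and $h^*$. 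Finally, note that the key identity itself does not ``follow directly'' from $f,h$ being dominant Poisson morphisms, since your derivation of it rests on the same false leaf-mapping property: taking $S=\mathbb{P}^1\times\mathbb{P}^1$ with a meromorphic symplectic bivector and $f,h$ the two projections to $(\mathbb{P}^1,0)$ satisfies the definition of rational Morita equivalence, yet $f^{-1}\F_{\sigma_1}$ and $h^{-1}\F_{\sigma_2}$ are the two distinct rulings. The paper simply asserts the identity without proof; a complete argument must either take it as a hypothesis or establish it under additional assumptions (connectedness of fibres, which you rightly flag, being another such assumption needed for the bijection).
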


Let us list some known results:

\begin{prop} \label{Prop-Polishchuk}(Polishchuk \cite{P}) Let $X$ be a Poisson variety.  Let $f: X \to Y$ be a morphism such that $f_*\O_X \simeq \O_Y$. Then the Poisson structure on
$X$ induces canonically a Poisson structure on $Y$ such that $f$ is a Poisson morphism. Furthermore, if $\E$ is a
Poisson module on $X$, then  $f_*\E$ is a Poisson module on $Y$. 
\end{prop}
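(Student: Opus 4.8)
The plan is to transport all structure from $X$ to $Y$ through the canonical identification $f^{\#}\colon \O_Y \xrightarrow{\sim} f_*\O_X$, which is exactly what the hypothesis $f_*\O_X \simeq \O_Y$ provides. The whole argument is formal once one works at the level of \emph{brackets} rather than bivectors, so I would phrase a Poisson structure on a normal variety as a $\mathbb C$-bilinear biderivation $\{\cdot,\cdot\}$ on $\O$ satisfying Jacobi, and a Poisson module as a bracket $\{\cdot,\cdot\}\colon\O\times\E\to\E$ obeying the Leibniz and flatness identities recorded in the definitions above.

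First I would construct the bracket on $Y$. For an open $U\subseteq Y$ and $g_1,g_2\in\O_Y(U)$, the sections $f^{\#}g_1,f^{\#}g_2$ lie in $\O_X(f^{-1}U)=(f_*\O_X)(U)$, so I can form $\{f^{\#}g_1,f^{\#}g_2\}_X\in\O_X(f^{-1}U)$. The key point is that, because $f^{\#}$ is an isomorphism of sheaves of $\mathbb C$-algebras, this section is the pullback of a unique function on $U$; I define $\{g_1,g_2\}_Y:=(f^{\#})^{-1}\{f^{\#}g_1,f^{\#}g_2\}_X$. Bilinearity and antisymmetry are inherited directly; Leibniz follows from $f^{\#}$ being a ring homomorphism together with the Leibniz rule on $X$; and Jacobi transfers because $f^{\#}$ is an isomorphism intertwining the two brackets. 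By construction $f^{\#}\{g_1,g_2\}_Y=\{f^{\#}g_1,f^{\#}g_2\}_X$, so $f$ is a Poisson morphism. On the regular locus this biderivation corresponds to a bivector, and by normality of $Y$ it extends to a section of $\wedge^2 T_Y$ in the reflexive sense used throughout.

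For the module statement I would repeat the same transport. Writing the Poisson module structure on $\E$ as a bracket $\{\cdot,\cdot\}_X\colon\O_X\times\E\to\E$, I define, for $g\in\O_Y(U)$ and $t\in(f_*\E)(U)=\E(f^{-1}U)$,
$$\{g,t\}_Y:=\{f^{\#}g,t\}_X\in\E(f^{-1}U)=(f_*\E)(U).$$
The $\mathbb C$-linearity in $t$ and the derivation property in the function argument are inherited from $X$. The module Leibniz rule $\{g,ht\}_Y=\{g,h\}_Y\,t+h\,\{g,t\}_Y$ is where the first part is used: expanding $\{f^{\#}g,\,f^{\#}h\cdot t\}_X$ by the Leibniz rule on $X$ produces the term $\{f^{\#}g,f^{\#}h\}_X\,t$, which equals $f^{\#}\{g,h\}_Y\,t$ precisely because $f$ is Poisson. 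Finally, vanishing of the curvature transfers from the corresponding flatness identity on $X$, again using that $f^{\#}$ is an algebra isomorphism intertwining the brackets.

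The argument has no deep obstacle; its entire content is packaged in the hypothesis $f^{\#}\colon\O_Y\xrightarrow{\sim}f_*\O_X$, which simultaneously guarantees that brackets of pulled-back functions descend to $Y$ and that the descended operations inherit Jacobi, Leibniz and flatness. The only points requiring care are bookkeeping ones: checking that the locally defined brackets are compatible with restriction so as to glue to genuine morphisms of sheaves, and verifying that the resulting biderivation on the open regular locus of the normal variety $Y$ extends to a reflexive bivector. Neither of these presents a real difficulty, which is why the transport philosophy succeeds so cleanly here.
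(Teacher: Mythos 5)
The paper itself gives no proof of this proposition---it is listed among "known results" and cited directly to Polishchuk---so the only comparison available is with Polishchuk's original argument, which is precisely your transport of structure along the canonical isomorphism $f^{\#}\: \O_Y \to f_*\O_X$, carried out at the level of brackets. Your proof is correct and is essentially that standard argument: antisymmetry, Leibniz, Jacobi, the module Leibniz rules and flatness all transfer formally because $f^{\#}$ is an algebra isomorphism intertwining the brackets, and your remark that the resulting biderivation on $Y$ is read as a bivector in the reflexive sense on the normal variety is consistent with the conventions the paper uses throughout.
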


\begin{teo}\label{Kaledin} (Kaledin \cite{Kaledin})
Let $X$ be a Poisson variety. 
The reduction,  any completion and the normalization of $X$ are again Poisson varieties. 
\end{teo}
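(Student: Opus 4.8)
The plan is to reduce everything to a local, algebraic statement and to exploit that a Poisson bracket is nothing but a biderivation $\{-,-\}\colon A\times A\to A$ satisfying Jacobi, together with the fact that the three operations are local on $X$ and behave well for derivations in characteristic zero. Fix an affine chart $\mathrm{Spec}\,A$ and write $\delta_a=\{a,-\}$ for the Hamiltonian derivation attached to $a\in A$. Each operation replaces $A$ by a canonically attached $\C$-algebra $B$ (respectively $A/\mathrm{nil}(A)$, the $\mc$-adic completion, and the integral closure $\tilde A$ inside the total quotient ring $K$), and in each case it suffices to descend or extend $\{-,-\}$ to $B$ as a biderivation. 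Jacobi then comes for free: on the reduction it is inherited as a quotient identity, while on the completion and the normalization the extended bracket is \emph{unique}, so the vanishing $[\sigma,\sigma]=0$ of the Schouten bracket, valid on a dense open set, propagates to all of $B$ by continuity and the fact that $B$ is reduced. Gluing the local $B$'s recovers a global bivector on the reduced, completed or normalized variety.

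For the reduction I would show that the nilradical $N=\mathrm{nil}(A)$ is a Poisson ideal. Since we work over $\C$, every derivation of $A$ preserves $N$: if $x^{n}=0$, then differentiating this relation with $D=\delta_a$ and a short induction show that $D(x)$ is nilpotent (one gets $D(x)^{2n-1}=0$), in the spirit of Seidenberg's differential-ideal theorem. Applying this to $\delta_a$ for every $a\in A$ gives $\{N,A\}\subseteq N$, so $\{-,-\}$ descends to $A/N=\O_{X_{\mathrm{red}}}$ and $X_{\mathrm{red}}$ is again Poisson.

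The normalization is the crux. First extend $\{-,-\}$ to the total quotient ring $K$ by the quotient rule; this is the unique Poisson bracket on $K$ restricting to the given one on $A$. The goal is that $\tilde A\subseteq K$ be a Poisson subalgebra. For fixed $a\in A$ the Hamiltonian derivation $\delta_a\colon A\to A$ extends to a derivation $\tilde A\to\tilde A$ by Seidenberg's theorem on derivations and integral closure (valid in characteristic zero); by antisymmetry this already yields $\{A,\tilde A\}\subseteq\tilde A$. The hard part, which I expect to be the main obstacle, is to promote this to $\{\tilde A,\tilde A\}\subseteq\tilde A$: for $b,c\in\tilde A$ one differentiates an integral equation $g(b)=0$ (with $g$ the minimal monic polynomial of $b$, separable because $\mathrm{char}=0$) along $\{-,c\}$ and must deduce integrality of $\{b,c\}$ from $g'(b)\{b,c\}\in\tilde A$ together with $g'(b)\neq 0$. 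This is exactly the content that forces a second application of the Seidenberg extension rather than a bare division by $g'(b)$. Geometrically, $\nu\colon\tilde X\to X$ is an isomorphism over the normal locus of $X$, whose complement (the conductor locus) may have codimension one; hence the pulled-back bivector $\nu^{*}\sigma$ must be extended across a \emph{divisor}, which is not a Hartogs phenomenon but is instead governed by integrality over $A$. This is precisely why Seidenberg's integral-closure theorem, and not mere reflexivity of $\tx$, is the tool that does the work, and why I regard this two-sided extension as the principal difficulty. Once $\tilde A$ is a Poisson subalgebra, the local brackets glue to a bivector $\tilde\sigma\in H^{0}(\tilde X,\wedge^{2}T\tilde X)$ with $\nu_{*}\tilde\sigma=\sigma$, and $[\tilde\sigma,\tilde\sigma]=0$ since it holds on a dense open and $\tilde X$ is reduced.

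Finally, for a completion one uses continuity in the $\mc$-adic topology. From the Leibniz rule each partial derivation satisfies $\{\mc^{i},g\}\subseteq\mc^{i-1}$, so $\{-,g\}$ is continuous for the $\mc$-adic topology; consequently the biderivation extends uniquely to a continuous biderivation on the completion $\hat A$, and the Leibniz and Jacobi identities pass to the $\mc$-adic limit. Hence the completion is again Poisson, which together with the previous two cases establishes the theorem.
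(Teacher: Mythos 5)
First, a point of context: the paper does not prove this statement at all --- it is quoted from Kaledin's article \cite{Kaledin} --- so your attempt must be measured against Kaledin's original argument rather than anything in this text. Two of your three parts are correct and standard, and are indeed the easy parts: the reduction (in characteristic zero the nilradical is a differential ideal, your exponent $D(x)^{2n-1}=0$ included, so the bracket descends) and the completion (Leibniz gives $\{I^{i},g\}\subseteq I^{i-1}$, hence $I$-adic continuity of all Hamiltonian derivations in both slots, and the identities pass to the limit). The extension of the bracket to the total quotient ring $K$ and the first application of Seidenberg, giving $\{A,\tilde A\}\subseteq\tilde A$, are also fine.

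The genuine gap is in the normalization step, at exactly the point you flag as the crux. Differentiating an integral equation does give $g'(b)\{b,c\}\in\tilde A$, but your proposed way of removing the factor $g'(b)$ --- ``a second application of the Seidenberg extension'' --- does not exist as a theorem. Seidenberg's theorem applies to a derivation $D$ of $K$ with $D(A)\subseteq A$; here the relevant derivation is $D=\{-,c\}$ with $c\in\tilde A$, for which you only know $D(A)\subseteq\tilde A$, and you cannot instead run Seidenberg over the ring $\tilde A$, because $D(\tilde A)\subseteq\tilde A$ is precisely the conclusion you want. Worse, the derivation-level statement you would need, namely that $D(A)\subseteq\tilde A$ forces $D(\tilde A)\subseteq\tilde A$, is false: on the cuspidal cubic $A=\C[t^{2},t^{3}]\subset\tilde A=\C[t]$, the derivation $D=t^{-1}\partial_{t}$ of $K=\C(t)$ satisfies $D(A)\subseteq\tilde A$ (every $a\in A$ has $a'(t)\in t\,\C[t]$, so $D(a)=a'(t)/t\in\C[t]$), yet $D(t)=1/t\notin\tilde A$. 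What the formal manipulations do give is only a pole bound: for $f$ in the conductor $\mathfrak{c}$ and $b\in\tilde A$ the derivation $f\{b,-\}$ maps $A$ into $\mathfrak{c}\tilde A\subseteq A$, so Seidenberg legitimately yields $\mathfrak{c}\,\{\tilde A,\tilde A\}\subseteq\tilde A$; but this still permits poles along the conductor locus, which is exactly the codimension-one divisor you correctly identified as the obstruction. Killing those residual poles requires using the two-sided Poisson structure (antisymmetry and the Jacobi identity, not just integrality in one argument at a time); that is the actual content of Kaledin's theorem, and his proof is a genuinely different and more delicate argument rather than an iteration of derivation-extension results. As written, your proof establishes the reduction and completion cases but not the normalization case.
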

 Therefore, we can assume that the Poisson variety $X$ is reduced and normal. Moreover, if there exist a morphism  $f: X \to Y$  with connected fibers, then by   Proposition  \ref{Prop-Polishchuk}   we have that $X$ is rationally Morita equivalent  to $Y$,  
since $f_*\O_X  \simeq \O_Y$  \cite[Chapter 9]{FGLKNV}.
This also says us that a   Poisson structure on  a  projective complex manifold $X$ induces  a natural Poisson structure on its   minimal model by pushing forward the Poisson bivector.

\begin{exe}
Let $S$ be a smooth  Poisson surface with a Poisson structure $\sigma\in H^0(S,\O_S(-K_S))$. Let 
$S^{[r] }= \hbox{\rm Hilb}^{r}(S) $ the Hilbert scheme parametrizing 0-dimensional subschemes of $S$ of length $r$. 
Bottacin in  \cite{Bottacin}  extended   Beauville's construction of a symplectic
structure on the Hilbert scheme $S^{[r] }$, let us say $\sigma^{[r]}$,  of a symplectic  surface $(S,\sigma)$ \cite{Beauville}.
Consider $ \hbox{\rm Bl}_{\Delta}(S^2)$,    the blowup along the diagonal $\Delta\subset S^2=S\times S$. 
Ran in \cite{Ran} showed that  there is  a diagram
$$
\xymatrix@R=10pt{
 & \hbox{\rm Bl}_{\Delta}(S^2) \ar@{->}[dl]_{p} \ar@{->}[dr]^{q}& \\
S^{[2] }& &  S^{2 },}
$$
such that    $q^*(\sigma\times \sigma)$ is a meromorphic Poisson bivector on $\hbox{\rm Bl}_{\Delta}(S^2)$  with simple pole on $\mathbb{P}(\Omega^1_S)$ and $p_*q^*(\sigma\times \sigma)$  is  the Poisson structure $\sigma^{[2]}$ on $S^{[2] }$ . Therefore, $S^{[2] } $ is rationally Morita equivalent to $ S^{2 }$. 
In the general,  we can conclude by Ran's induction construction that  $S^{[r] } $ is rationally Morita equivalent to $ S^{[r-1] }\times S$, for all $r\geq  2$, see \cite[Section 1.6]{Ran}.
Since  $ S^{[r-1] }\times S$ is clearly Morita equivalent  to $S$, we conclude that  $S^{[r] } $ is Morita equivalent  to $S$, for all $r\geq  2$. 
\end{exe}

\begin{defi}
Let $(X,\sigma)$ be a Poisson variety  which is not generically symplectic. 
We say that  $(X,\sigma)$  is  {\it transcendental}, in the spirit of \cite{ AD2, LPT}, if there is no positive-dimensional algebraic
subvariety through a general point of $X$ that is tangent to the symplectic  foliation $\F_{\sigma}$. 
\end{defi}

\begin{defi}
A morphism  $f:Z\to X$ between normal varieties is called a {\it quasi-\'etale morphism}  if $f$ is finite and \'etale in
codimension one.
 \end{defi}

Now, we will prove our main result.
\begin{teo}\label{teo-Morita}
Let  $(E, \nabla) $ be a  locally free Poisson module   on a    klt   Poisson  projective  variety $(X,\sigma)$.  
Then  at least one of the following holds up to  rational   Morita equivalence.
\begin{itemize}
\item[(a)]   $(E, \nabla) $ corresponds to  a   flat holomorphic sheaf   on a transcendental Poisson  variety;

\item[(b)]    $(E, \nabla) $ corresponds to a  meromorphic    flat   connection on a generically symplectic variety. 

\item[(c)]      $(E, \nabla) $ corresponds  to a co-Higgs  sheaf on a   variety with trivial Poisson structure . 

\item[(d)]      $(E, \nabla) $ corresponds  to a meromorphic  co-Higgs  sheaf  $(E_0, \psi) $  
  on a transcendental Poisson variety $(Y,\sigma_0)$, there exist
a rational map $\zeta:Y  \dashrightarrow B$,  over a variety $B$ with $\dim(B)= \dim(\F_{\sigma_0})$,  and the co-Higgs field  
$ \phi$ is tangent to $T_{Y|B}$  and  satisfies $D_0(\phi)=0$, where $D_0$ is  a  meromorphic extension of a  Poisson connection on $T_{Y|B}\otimes End(E_0)$.
\end{itemize}
\end{teo}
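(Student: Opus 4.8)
The plan is to read off the structure of $(E,\nabla)$ from the geometry of the symplectic foliation $\F_\sigma$, and to transport the module across the resulting fibrations by Polishchuk's pushforward (Proposition~\ref{Prop-Polishchuk}), which is exactly a rational Morita equivalence. First I would use Kaledin's theorem (Theorem~\ref{Kaledin}) to assume $X$ normal and reduced. Then I would dispose of the generically symplectic case at once: if $(X,\sigma)$ is generically symplectic, Example~\ref{exe-sym} identifies $\nabla$ with a flat connection — holomorphic if the degeneracy divisor $D(\sigma)$ is reduced, and meromorphic with poles along $D(\sigma)$ otherwise — via the isomorphism $\sigma^{\#}\colon \Omega^1_X(\log D)\to T_X(-\log D)$. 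This places us in case~(b).

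\textbf{The non-symplectic case and the key triviality.} The substance lies in the case where, by the klt Poisson hypothesis, $\F_\sigma$ has canonical singularities. The crucial observation is that $\sigma$ restricts on the symplectic locus to a nondegenerate section of $\wedge^2 T\F_\sigma$, so its top exterior power is a nowhere-vanishing section of $\det T\F_\sigma = K_{\F_\sigma}^{-1}$ there; since $\codim Sing(\F_\sigma)\ge 2$ and $X$ is normal, this extends to give $K_{\F_\sigma}\cong \O_X$. Thus $\F_\sigma$ is a foliation with canonical singularities and numerically trivial canonical class on a klt variety. At this point I would invoke the decomposition theorems of Loray--Pereira--Touzet and Druel \cite{LPT, Druel2}: after a quasi-\'etale cover $f\colon W\times Y\to X$, the pulled-back foliation splits, with an algebraically integrable part (rationally connected leaves) carried by the $W$-factor and a transcendental symplectic part carried by $Y$.

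\textbf{Pushforward and case analysis.} Pushing $f^*(E,\nabla)$ forward along $\pi_2\colon W\times Y\to Y$ by Proposition~\ref{Prop-Polishchuk} yields a Poisson module on $(Y,\sigma_0)$ with $\sigma_0=(\pi_2)_* f^*\sigma$, and this is a rational Morita equivalence. The outcome is then read off from the residual structure on $Y$. If the whole symplectic foliation is algebraically integrable, then $\sigma_0=0$ and the example on fibrations with $\rho_*\sigma=0$ turns the pushforward into a co-Higgs sheaf on a variety with trivial Poisson structure, giving case~(c). If the transcendental part is all that survives and $\sigma_0$ vanishes as a connection obstruction, the Poisson connection descends to an honest flat holomorphic connection on a transcendental variety, giving case~(a). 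In the mixed situation, $\sigma_0$ is nonzero and transcendental, and one must produce the fibration $\zeta\colon Y\dashrightarrow B$ with $\dim B=\dim\F_{\sigma_0}$ and the meromorphic co-Higgs field $\phi$ tangent to $T_{Y|B}$ satisfying $\phi\wedge\phi=0$ and $D_0(\phi)=0$, which is case~(d).

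\textbf{Main obstacle.} I expect the genuinely mixed case~(d) to be the hard part. The difficulty is to separate the flat Poisson connection $\nabla$ into components adapted to the product decomposition — a leafwise piece along the algebraic fibers, which flatness forces to be a genuine flat connection, and a transverse piece relative to $\zeta$, which should assemble into the co-Higgs field $\phi$. Extracting $\phi$ as a section of $T_{Y|B}\otimes End(E_0)$, deriving the integrability $\phi\wedge\phi=0$ from the vanishing curvature $\nabla^2=0$ together with the commutation of the horizontal and vertical parts, and obtaining $D_0(\phi)=0$ from the Poisson-flatness along $\F_{\sigma_0}$, all while controlling the meromorphic poles that appear along the degeneracy locus, is the technical heart of the theorem and the step I would spend the most care on.
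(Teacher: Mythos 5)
Your skeleton coincides with the paper's own: the generically symplectic case is dispatched by Example \ref{exe-sym}; in the non-symplectic case $K_{\F_\sigma}\simeq\O_X$ with canonical singularities, Druel's decomposition (the paper invokes \cite[Proposition 8.14]{Druel18}) supplies the quasi-\'etale cover $f\colon W\times Y\to X$ with $f^{-1}\F_\sigma=\pi_2^{-1}\sK$ for a transcendental foliation $\sK$ on $Y$, and Polishchuk's pushforward along $\pi_2$ realizes the rational Morita equivalence. But there is a genuine gap: you never prove cases (a) and (d); you only state what they require and defer them as ``the main obstacle.'' The paper's mechanism for both is a single dichotomy that your proposal never formulates: either the induced connection $\nabla_0$ on $E_0=(\pi_2)_*f^*E$ takes values in $T\F_{\sigma_0}\otimes E_0$, in which case the anchor map gives an isomorphism $\vartheta\colon T\F_{\sigma_0}^*\to T\F_{\sigma_0}$ (using that the singular set has codimension $\geq 2$) and $\vartheta^{-1}\circ\nabla_0$ is a flat \emph{partial} holomorphic connection along the transcendental foliation --- this is case (a), not a descent statement about ``$\sigma_0$ vanishing as a connection obstruction'' --- or else composing $\nabla_0$ with the projection $TY\to N\sK$ gives a nonzero $\phi_0\in H^0(Y,N\sK\otimes End(E_0))$, and Wang's local-matrix argument (flatness reads $\delta(\theta)+\theta\wedge\theta=0$; projecting kills $\delta(\theta)$, which is tangent to the foliation, and yields $\pi(\theta)\wedge\pi(\theta)=0$) gives $\phi_0\wedge\phi_0=0$.

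Equally missing is the construction of $B$ itself, which statement (d) demands. The paper produces it concretely: embed $Y\subset\P^N$, take a generic projection making $Y\to\P^n$ finite, then a generic linear projection $\P^n\dashrightarrow\P^k$ with $k=\dim\sK$ so that the composite $Y\dashrightarrow\P^k$ is generically transversal to $\sK$, and Stein factorize to obtain $\zeta\colon Y\dashrightarrow B$ with connected fibers. The fiber foliation $\sG$ of $\zeta$ is then generically transversal to $\sK$, so $T_\sG\to TY\to N\sK$ is an isomorphism off the tangency locus $S$; transporting $\phi_0$ through this isomorphism and extending with poles of arbitrary order on $S$ gives the meromorphic co-Higgs field $\phi$ tangent to $T_{Y|B}$, and \cite[Corollary 3.3]{Wang} supplies the Poisson connection $\tilde D_0$ on $(N\sK^*\otimes End(E_0))|_{Y\setminus S}$ with $\tilde D_0(\tilde\phi_0)=0$, whose meromorphic extension $D_0$ satisfies $D_0(\phi)=0$. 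Without the tangency dichotomy and without this projection/Stein-factorization construction, your proposal describes the theorem rather than proving its hardest cases.
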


\begin{proof}
Let   $(X,\sigma)$  be a  generically symplectic variety.     Consider the  degeneracy Poisson divisor
$D=\{\sigma^{n}=0\} \in |-K_X|$. Then, it follows from Example \ref{exe-sym}  that  $ \nabla$ induces  a  meromorphic    connection with poles along $D$ .

Suppose that $(X,\sigma)$  is  not    generically   symplectic and   consider  the  associated   symplectic foliation $\F_{\sigma}$. 
Since $K\F_{\sigma}\simeq\O_X$ and $\F_{\sigma}$ has canonical singularities, 
it follows from \cite[Proposition 8.14]{Druel18}   that  there exist two arrows 
$$
\xymatrix@R=10pt{
 &Z=W\times Y   \ar@{->}[dl]_{f} \ar@{->}[dr]^{\pi_2}& \\
X & & Y }
$$
such that  $ f: Z \to   X $ is a quasi-\'etale cover,  $ \pi_2: Z   \to  Y $  is the natural projection, $Y$ and $Z$ are 
normal klt projective varieties, 
and  it there is a  transcendental foliation $\sK$ on $Y$ such that  $\pi_2^{-1}\sK= f^{-1} \F_{\sigma} $.  

Since  $f: Z \to   X $ is a quasi-\'etale cover there is a Poisson bivector $\tilde{\sigma} \in H^0(Z,\wedge^2TZ  ) $  such that $f_*\tilde{\sigma}=\sigma.$ That is, $f$ is a Poisson morphism.  Indeed,   the map $ f: f^{-1}(X_{reg}) \to   X_{reg}$ is  a 
 map between complex manifolds which is a local biholomorphism, so there is a well-defined
pull back  Poisson bivector field  $(f^{-1})^*(\sigma|_{X_{reg}})$ which extends to a section 
 $\tilde{\sigma} \in H^0(Z,\wedge^2TZ  ) $. 
Similarly,   one can also see that $f^*E$ is a Poisson module, with respect to $\tilde{\sigma}$, by lifting the local matrices of vector fields  which  represent  the Poisson connection $\nabla$.  
Let  $\tilde{\nabla}$ denote   such induced  Poisson connection on $f^*E$. 
Thus, we have a Morita equivalence 
 $$(X,\sigma)   \longleftarrow (Z, \tilde{\sigma})   \longrightarrow  (Y,\sigma_2), $$
 where  
$\sigma_2=:(\pi_2)_*\tilde{\sigma}$.
In particular, we have that   $ E_1$  is rationally Morita equivalent   to  $(\pi_2)_*(f^*E):=E_0$. 
Denote by $\nabla_0$ the Poisson connection induced on $E_0$.

 Let us make the following simple but important observation:
 
  Let     $(E,\nabla) \to (W, \sigma)$  be a   Poisson module with a  flat Poisson connection $\nabla$ on a Poisson variety $(W,  \sigma)$ such that   $Sing(\F_{\sigma})\cup Sing(W)$ has codimension $\geq 2$. 
Then, we have that either:
  \begin{itemize}
\item[(i)]   $E$ corresponds to a  flat holomorphic  sheaf along the symplectic foliation $\F_{\sigma}$, or
\\ 
\item[(ii)]  $\nabla$  induces 
a non-trivial  section 
 $\phi\in H^0(W, N\F_{\sigma}\otimes End(E) )$
 such that $\phi\wedge \phi =0$. 
\end{itemize}
In fact, 
 if     $(E,\nabla)$  is  such that  $\nabla: E  \to T \F_{\sigma}\otimes E $, then $E$ corresponds to a  flat holomorphic  sheaf along the symplectic foliation.  
Indeed,   since $cod( Sing(\F_{\sigma}))\geq 2$,  then
the map 

\centerline{
\xymatrix{
T\F_{\sigma}^*  \ar[rr]_{i} \ar@/^2pc/[rrrr]^{\vartheta} &&  \Omega_W^1 \ar[rr]_{\sigma_2^{\#}} && T \F_{\sigma} \\
}
}
 \noindent  is an isomorphism, where $i:  T\F_{\sigma} ^* \to \Omega_W^1$ is the inclusion and $ \sigma_2^{\#}:  \Omega_W^1  \to  T\F_{\sigma} $ denotes the anchor map. 
Therefore, the Poisson connection 
 
 \centerline{
\xymatrix{
\vartheta^{-1}\circ \nabla_0: E \ar[r] &   \F_{\sigma}^* \otimes E  \\
}
}
 \noindent  is a partial flat connection on $E$.  
 If the Poisson  connection  $\nabla_0:E \to TW\otimes E $  does not factor through $T\F_{\sigma}$, then it induces 
a non-trivial  section 
 $$\phi\in H^0(W, N\F_{\sigma}\otimes End(E) )$$
 such that $\phi\wedge \phi =0$.  In fact, 
we have the commutative diagram 
 
  \centerline{
\xymatrix{
E\ar[rdd]_{\phi} \ar[r]^{\nabla} &   TW \otimes E\ar[dd]^{\pi\otimes Id}   \\
  &     \\
    &     N\F_{\sigma} \otimes E.
}
}
 \noindent  where $\phi= (\pi\otimes Id) \circ \nabla$  and $\pi: TW \to  N\F_{\sigma}$ denotes the projection.  
 Now, let $W^0:=W-Sing(\F_{\sigma})\cup Sing(W)$  and consider   $\phi_{0}$  the restriction of $\phi$ on $W^0$.  In order to show that    $\phi\wedge \phi =0$  we follow the argument  in \cite{Wang} and next we extend $\phi_{0}$. Indeed, 
let  $\theta$ be a local  matrix representing the  connection  $\nabla_0$. By flatness we have that
$$
\delta(\theta) +\theta\wedge \theta=0.
$$
Then $\pi(\delta(\theta) )=0$, since $\delta(\theta)$ is tangent to $\F_{\sigma}|_{W^0}$ and $0=\pi(\theta\wedge \theta)= \pi(\theta) \wedge \pi(\theta) $.  Since 
$\pi(\theta)$ is the local matrix of $\phi_0$, we conclude that $\phi_0\wedge \phi_0 =0$, i.e,   $\phi\wedge \phi =0$,  since $Sing(\F_{\sigma})\cup Sing(W)$ has codimension $\geq 2$.

If  $  \sigma_2=0$, then $ (\pi_2)_*[\tilde{\nabla}]\in H^0(Y,  T_Y \otimes End(E_0))$ is a co-Higgs field. 

 If $ (Y,\sigma_2)$  is  generically symplectic, then the Poisson module $(E_0,\nabla_0)$ corresponds to a   meromorphic    flat   connection.  
  
     From now on we  suppose that $  \sigma_2\neq 0$ and    $ (Y,\sigma_2)$  is not  generically symplectic.   
     
   If $\F_{\sigma}$  is  algebraic,  i.e., the  symplectic foliation  $f^*\F_{\sigma}$  is given by the fibration   $ \pi_2: Z=W\times Y \to  Y $.  
 Then,  the connection $ \tilde{\nabla}$ corresponds either 
   to a relative flat connection $\tilde{\nabla}: f^*E \to \Omega^1_{Z|Y}\otimes  f^*E $,  since   we have the isomorphism $\Omega^1_{Z|Y} \simeq T_{Z|Y}$, or the  induced  co-Higgs field $[\tilde{\nabla}]\in H^0(Z,   \pi_2^*T_Y \otimes End( f^*E ))$ is such that 
$  (\pi_2)_*[\tilde{\nabla}]\in H^0(Y,  T_Y \otimes End(E_0 ))$.

If  the symplectic foliation  $\F_{\tilde{\sigma}}$ is not algebraic, then $\sK$ corresponds to the symplectic foliation of the Poisson structure $\sigma_2$, since $\pi_2^{-1} \F_{\sigma_2}=  \F_{\tilde{\sigma}} =f^{-1}\F_{\sigma}=\pi_2^{-1}\sK$. 
 Therefore, as we have seen above,   it is either:
 \begin{itemize}
\item[(i)]  $(E_0,\nabla_0)$ corresponds to a  flat holomorphic  sheaf along the symplectic foliation, or
\\ 
\item[(ii)]  $\nabla_0$  induces 
a non-trivial  section 
 $$\phi_0\in H^0(Y, N\sK\otimes End(E_0) )$$
 such that $\phi_0\wedge \phi_0 =0$. 
\end{itemize}
Consider an embedding  $Y\subset \mathbb{P}^N$ and  let $k:= \dim(\sK) $. 
We can  take a generic
projection  $ q:\mathbb{P}^N  \dashrightarrow  \mathbb{P}^n $, such  that restricted to $Y$ we produce  a finite surjective morphism $ q_{|Y}: Y  \to \mathbb{P}^n $.  We also can   take   a generic rational linear projection $p:\mathbb{P}^n  \dashrightarrow  \mathbb{P}^k $ in such way that    the fibration  $ p\circ (q_{|Y}): Y     \dashrightarrow  \mathbb{P}^k $ is generically transversal to $\sK$. After we  take a Stein factorization   we obtain a rational fibration $ \zeta: Y     \dashrightarrow  B$, with connected fibers, which induces 
 on $Y$ an   algebraic  foliation $\sG$ of dimension equal to $n-k$. 
Consider   the tangency loci   between $\sK$ and $\sG$  given by  
$$S:=\{y\in Y; \omega(\xi)(y)=0\},$$  
where $\omega \in H^0(Y,\Omega_Y^{[n-k]}\otimes \det(N\sK))$ and $\xi\in H^0(Y,\wedge^{k}TY\otimes \det(T_{\sG})^*)$ are the tensors inducing $\sK$  and $\sG$, 
respectively.  Observe that $S$ contain the singular sets of  $\sK$ and $\sG$.  
By transversality the induced map 

\centerline{
\xymatrix{
T_{\sG}  \ar[rr]^{i} \ar@/^2pc/[rrrr]^{\beta} && TY \ar[rr]^{\pi} && N\sK   \\
}
} 
 \noindent   is an isomorphism on $Y^0:=Y-S$.   This gives  us a co-Higgs field 
  $$  \tilde{\phi_0} \in H^0(Y^0,  (T_{\sG} \otimes End(E_0))|_{Y^0} )$$ given by  $\tilde{\phi_0}=\phi_0\circ \beta^{-1}$. 
Denote by $j:Y^0\to Y$ the inclusion map and consider  the sheaf of meromorphic sections with poles of arbitrary order on $S$ given by  
  $$
  T_{\sG} \otimes End(E_0) (*S)=j_* ((T_{\sG} \otimes End(E_0))|_{Y^0}).  
  $$
Thus, we obtain  a meromorphic co-Higgs  field 
  $ \phi \in H^0(Y,  T_{\sG} \otimes End(E_0)\otimes \O_Y(*S))$ which is a meromorphic extension of $\tilde{\phi_0}$.
Recall that $\sK$ and $\sG$ are regular on $Y^0$. Thus,   
it  follows from   \cite[ Corollary 3.3]{Wang}  that the Partial connection on $ N\sK|_{Y^0} \simeq   T_{\sG} |_{Y^0} $ and the Poisson connection $\nabla_0$ induces  a Poisson connection $\tilde{D_0}$ on 
$$(  N\sK^*\otimes End(E_0))|_{Y^0}\simeq (\Omega^1_{Y|B}\otimes End(E_0))|_{Y^0}$$ 
such that 
 $ \tilde{D_0}(   \tilde{\phi_0})=0$. Denoting by $D_0$ its meromorphic extension we have that $D_0( \phi)=0$.

\end{proof}

\begin{remark} \label{conn-Higgs}
 If   $\phi$  denotes a  local representation of  $ \phi_0 $, then from 
 \cite[Proposition 3.2, equation 3.2 ]{Wang}   we conclude  that  $D_0( \phi)=0$ implies that 
$$ \delta(\phi )=0\ \ \ \phi  \wedge \phi  =0 .$$

\end{remark}

Fixed a klt Poisson structure $(X,\sigma)$, consider 
  the  associated  category   of  Poisson modules $  {\text{\rm Rep}}(X,\sigma) $. Also, 
 denote by $ {\text{\rm  Co-Higgs}} (X )$ the category of co-Higgs bundles and 
   $ \text{\rm  Conn}(X,D)$  the category of meromorphic connections along $D$.
 It follows from the proof of Theorem \ref{teo-Morita}    that  via the Morita equivalence  between
  $(X,\sigma)$     and  $   (Y,\sigma_2) $  we obtain the following induced  functors:
\begin{itemize}
\item  $ \pi_*f^*  : {\text{\rm Rep}}(X,\sigma)\to  {\text{\rm  Co-Higgs}} (Y ) $, if $   \pi_*  f^* (\sigma)=0$. 

\item    $\pi_*   f^* : {\text{\rm Rep}}(X,\sigma)\to   {\text{\rm  Conn}}(Y,D ) $, if $ \pi_*  f^* (\sigma)$ is generically symplectic and $D$   is the degeneracy Poisson Divisor.

\end{itemize}

\section{Rank two  $\mathfrak{sl}_2$-Poisson modules }\label{section-sl2}

Let $(X,\sigma)$ be a Poisson projective  variety with Poisson bivector $\sigma$ and denote by $\delta$ the Lie algebroid derivation induced by $\sigma$.
As we  have seen in the Theorem \ref{teo-mori}, the presence of singularities of the  Poisson structure   forces  in certain situations that the Poisson connections     have 
poles along a divisor. In this section we will study the geometry of rank two meromorphic Poisson modules which are trace free.

\begin{defi}
A   rank two holomorphic  vector bundle $E$ on a  projective  Poisson variety $(X,\sigma)$ is called a  {\it meromorphic Poisson module}  if there exists a connection $$\nabla: E \to E\otimes T_X(D)$$ with effective polar divisor $D$, 
such that $\nabla^2=0$. 
If  $\mathrm{tr}(\nabla)=0$  
we say that $(E,\nabla)$ is a {\it meromorphic  $\mathfrak{sl}_2$-Poisson module}.  If $D=\emptyset$ , then $(E,\nabla)$ is a holomorphic Poisson module. 

\end{defi}
Given two  different  $\mathfrak{sl}_2$-Poisson structures   $(E,\nabla_1)$ and $(E,\nabla_2)$, with effective polar divisor $D$, then  $(E,\nabla_1-\nabla_2)$ is a meromorphic   $\mbox{SL}(2,\mathbb{C})$-co-Higgs bundle.

First, we prove the following  Polishchuk's result  \cite{P}  in our context. 

\begin{prop}\label{Prop-Poli}
Let $(E,\nabla)$ be a rank two meromorphic   $\mathfrak{sl}_2$-Poisson module on a  projective  Poisson manifold $X$. Then, there exists a triple of rational vector fields $(v_0,v_1,v_2)$  on $X$ such that 
\begin{equation}\label{MC} 
 \begin{matrix}
\delta(v_0)=\hfill v_0\wedge v_1\\
\delta(v_1)=2v_0\wedge v_2\\
\delta(v_2)=\hfill v_1\wedge v_2
\end{matrix} 
\end{equation}
where $\delta(v)=[v,\sigma]$, $\sigma$ is the Poisson bivector of $X$ and $[\ , \ ]$ denotes the Schouten bracket.  Moreover, $\mathbb{P}(E,\nabla)$ has a   meromorphic  Poisson structure  induced by $\nabla$. 

\end{prop}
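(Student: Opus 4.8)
The plan is to reduce everything to a generic computation with the connection matrix, in the spirit of the local analysis already carried out in the proof of Theorem~\ref{teo-Morita}.

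First I would trivialize $E$ generically. Since $E$ is locally free of rank two on the projective manifold $X$, it is trivial on a Zariski-dense open set $U\subseteq X$, and fixing an isomorphism $E|_U\cong \O_U^{\oplus 2}$ I would write $\nabla=\delta+\theta$, where $\theta$ is a $2\times2$ matrix whose entries are rational vector fields on $X$ with poles bounded by the polar divisor $D$. The trace-free hypothesis $\mathrm{tr}(\nabla)=0$ gives $\mathrm{tr}(\theta)=0$; after a linear rescaling of the frame adapted to the structure constants in \eqref{MC} I may then write
$$
\theta=\begin{pmatrix}-\tfrac12\,v_1 & -v_2\\[2pt] v_0 & \tfrac12\,v_1\end{pmatrix},
$$
which defines the three rational vector fields $v_0,v_1,v_2$ on $X$.

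Next I would impose flatness. As recorded in the proof of Theorem~\ref{teo-Morita}, in a local frame the vanishing of the curvature $\nabla^2$ is the Maurer--Cartan identity $\delta(\theta)+\theta\wedge\theta=0$, where $\delta$ acts entrywise by $v\mapsto[v,\sigma]$ and $(\theta\wedge\theta)_{ij}=\sum_k\theta_{ik}\wedge\theta_{kj}$. A direct expansion, using the antisymmetry of $\wedge$ on vector fields, gives
$$
\theta\wedge\theta=\begin{pmatrix} v_0\wedge v_2 & v_1\wedge v_2\\[2pt] -\,v_0\wedge v_1 & -\,v_0\wedge v_2\end{pmatrix},
$$
so that the four entries of $\delta(\theta)+\theta\wedge\theta=0$ read $\delta(v_1)=2\,v_0\wedge v_2$ (appearing twice, from the diagonal, consistently by trace-freeness), together with $\delta(v_2)=v_1\wedge v_2$ and $\delta(v_0)=v_0\wedge v_1$, which are exactly the relations \eqref{MC}.

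Finally, for the last assertion I would invoke Polishchuk's construction (Proposition~\ref{Prop-Polishchuk}) adapted to the meromorphic setting. A Poisson-module structure $(E,\nabla)$ on $(X,\sigma)$ induces a bivector $\Pi$ on $\mathbb{P}(E)$: in the fiber coordinate $z=s_1/s_2$ over $U$ its vertical part is the Riccati field $(v_0+v_1z+v_2z^2)\,\partial_z$ determined by $(v_0,v_1,v_2)$, coupled to the $\nabla$-horizontal lift of $\sigma$. I expect the main obstacle to be precisely the verification that $\Pi$ is Poisson, i.e.\ $[\Pi,\Pi]=0$: the Jacobi identity should reduce, after a Schouten-bracket computation, to the system \eqref{MC} together with $[\sigma,\sigma]=0$, so that flatness of $\nabla$ is exactly the integrability of $\Pi$. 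Since the $v_i$ have poles along $D$, the bivector $\Pi$ has poles along $\pi^{-1}(D)$, whence $\mathbb{P}(E,\nabla)$ is a meromorphic Poisson variety; here one must also check that Polishchuk's algebraic construction, being functorial in $\nabla$, survives verbatim along the pole divisor.
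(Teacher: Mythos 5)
Your proposal is correct and takes essentially the same route as the paper: a generic (birational) trivialization of $E$, the Maurer--Cartan expansion $\delta(\theta)+\theta\wedge\theta=0$ of flatness giving exactly the system (\ref{MC}), and then the Riccati-type bivector $\sigma+(v_0+v_1z+v_2z^2)\wedge\frac{\partial}{\partial z}$ on $\mathbb{P}(E)$ whose integrability is equivalent to (\ref{MC}) together with $[\sigma,\sigma]=0$. The only deviations are harmless normalization conventions (your factor $\tfrac12$ on the diagonal versus the paper's matrix $\begin{pmatrix} v_1 & v_2\\ v_0 & -v_1 \end{pmatrix}$ and its coefficient $2v_1z$ in the Riccati field), which you track consistently, and you actually carry out the entrywise curvature computation that the paper only asserts.
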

\begin{proof}
Since $X$ is projective  we have that $\mathbb{P}(E)$  birationally equivalent  to $X\times \mathbb{P}^1$. In the vector bundle  $X\times \mathbb{C}^2$  we have a    trace free   Poisson connection   given by
$$
 \nabla(Z)=\delta(Z)+\begin{pmatrix}v_1 &v_2
 \\v_0&-v_1\end{pmatrix}\cdot Z,
$$
where $Z=(z_1,z_2)\in \mathbb{C}^2$. The flatness condition  is equivalent to 
\begin{equation} 
 \begin{matrix}
\delta(v_0)=\hfill v_0\wedge v_1\\
\delta(v_1)=2v_0\wedge v_2\\
\delta(v_2)=\hfill v_1\wedge v_2
\end{matrix} .
\end{equation}

Consider the  meromorphic  bivector 
$$
\Sigma_ \sigma = \sigma + (v_0+2v_1z+v_2z^2)\wedge \frac{\partial}{\partial z},
$$ 
where $[1:z]\in  \mathbb{P}^1$ denotes the affine coordinate.  Then $\Sigma$ is Poisson if and only if 
$$
 \begin{matrix}
[v_0,\sigma]=\delta(v_0)=\hfill v_0\wedge v_1\\
[v_1,\sigma]=\delta(v_1)=2v_0\wedge v_2\\
[v_2,\sigma]= \delta(v_2)=\hfill v_1\wedge v_2.
\end{matrix}
$$

\end{proof}

We  observe  that  if  the Poisson connection $(E,\nabla)$  is  such that  $$\nabla:E \to T\F(D)\otimes E\subset  TX  \otimes E(D),$$ then the associated   triple $(v_0,v_1,v_2)$  
satisfying (\ref{MC}) induces on each leaf of $\F$ a 
 transversely projective holomorphic foliation   \cite[Chapter II]{S}. Let $p\in X\setminus Sing(\F)$ and $F_p$ the symplectic leaf of $\F$ passing through $p$.  
 Since $ (\sigma^{\#} )^{-1}\circ \delta |_{F_p}=d \circ (\sigma^{\#} )^{-1} |_{F_p}\ $, where $d$ denotes  the de Rham differential, we have that  
 \begin{equation} 
 \begin{matrix}
 d \circ (\sigma^{\#} )^{-1}(v_0)=(\sigma^{\#} )^{-1}\circ \delta |_{F_p} (v_0)=\hfill   (\sigma^{\#} )^{-1}(v_0)\wedge (\sigma^{\#} )^{-1}(  v_1)\\
d \circ (\sigma^{\#} )^{-1}(v_1)=(\sigma^{\#} )^{-1}\circ \delta |_{F_p} (v_1)=2  (\sigma^{\#} )^{-1}(  v_0)\wedge    (\sigma^{\#} )^{-1}(v_2)\\
d \circ (\sigma^{\#} )^{-1}(v_2)=(\sigma^{\#} )^{-1}\circ \delta |_{F_p} (v_2)=\hfill (\sigma^{\#} )^{-1}( v_1)\wedge (\sigma^{\#} )^{-1}( v_2).
\end{matrix} 
\end{equation}
Now, defining $ (\sigma^{\#} )^{-1} |_{F_p} (v_i)=\omega_i$, we obtain the  transversely projective structure on  $F_p$ given by the triple $(\omega_0,\omega_1,\omega_2)$, where $\omega_0$ is the $1$-form inducing the transversely projective holomorphic foliation  on $F_p$. 
As we already have seen, a  natural way to produce such Poisson structure is by considering a meromorphic connection $\tilde{\nabla}:E \to   E\otimes  \Omega^1_X(D) $  and composing with the anchor map
$\sigma^{\#}:\Omega_X^1\to TX$ we get $\nabla=\sigma^{\#}\circ \tilde{\nabla}:E \to     E\otimes T\F(D) $. See \cite[Proposition 1.8.2]{Fer2} for a more general consideration for principal bundles in the real  category. 

 \begin{remark}
 There is a connection between    transversely projective  holomorphic   foliation and  quantization of symplectic foliation.  
 Biswas  in \cite{Biswas} showed that  for any regular  transversely projective   foliation  $\F$ there is a regular  transversely symplectic foliation $\tilde{\F}$ on its conormal bundle  $N\F^*$. Moreover,  he proves that the restriction of   $\tilde{\F}$ to the complement of the zero section admits a canonical quantization. 
\end{remark}

 \begin{exe}
Consider  a  Poisson structure in $\mathbb{P}^3$ induced in homogeneous coordinates by
 $\sigma=v_0\wedge v_1$,  where $v_0$ and $v_1$ are  degree one polynomial vector fields  satisfying 
 $[v_0,v_1]=0$. In this case we have that 
 $$
\delta(v_i)=L_{v_i} ( v_0\wedge v_1)= 0
$$
for $i=1,2$. 
 
\end{exe}

\begin{exe}
Consider the Poisson structure in $\mathbb{P}^3$ induced in homogeneous coordinates by
$\sigma=v_0\wedge v_1$, where
$$
v_0=z_1\frac{\partial}{\partial z_1} +2z_2\frac{\partial}{\partial z_2}+3z_3\frac{\partial}{\partial z_3},
\\
v_1=-4z_0\frac{\partial}{\partial z_1} -4z_1\frac{\partial}{\partial z_2}-4z_2\frac{\partial}{\partial z_3}.
$$
This Poisson structure corresponds to the exceptional foliation appearing in the Cerveau--Lins Neto's classification \cite{CL}. 
A computation gives us 
\begin{equation} \nonumber
 \begin{matrix}
\delta(v_0)= L_{v_0} (v_0\wedge v_1 )=  v_0\wedge v_1\\
\delta(v_1)=L_{v_1} (v_0\wedge v_1 )=0
\end{matrix} ,
\end{equation}
since $[v_1,v_0]=-v_1$. 
We refer the reader to  \cite[Proposition 8.9.2]{Pym} for a more   conceptual  construction. 
\end{exe}

\begin{remark}
Let $(E,\nabla)$ be a   rank two  holomorphic $\mathfrak{sl}_2$-Poisson module  on a  normal projective  Poisson variety $(X, \sigma) $.    Then  
 $\mathbb{P}(E,\nabla)$ is rationally Morita equivalence  to  $(X, \sigma) $, since  $\pi_*\Sigma_ \sigma=\sigma$.  
In particular, if  $(E,\phi)$  is a  rank two  $\mbox{SL}(2,\mathbb{C})$-co-Higgs bundle,     then  
 $(\mathbb{P}(E ),\Sigma_ \phi)$ is rationally Morita equivalence  to  $ X $ with zero Poisson structure, since $\pi_*\Sigma_ \phi=0$. 
  
  \end{remark}

\begin{prop}\label{proposition-co-higgs}
Let $(E,\phi)$ be a rank two meromorphic   $\mbox{SL}(2,\mathbb{C})$-co-Higgs  bundle  on a  normal projective  Poisson variety $X$.    Then  
 after a birational trivialization  of $ (E, \phi)$  the  co-Higgs   field is of the form 
$$
\begin{pmatrix}f_1 &f_2
 \\f_0&-f_1\end{pmatrix}\otimes v_{\phi},
$$
for some rational  vector field $v_{\phi}\in H^0(X, TX \otimes \sL)$  and rational functions $f_0,f_1,f_2 \in K(X)$. 
  Moreover, the symplectic foliation induced on  $\mathbb{P}(E)$ has dimension two and it is the  pull-back of the one-dimensional foliation $\H_{v_0}$ on $X$ induced by  $v_{\phi}$.  In particular,  if  $\H_{v_0}$ has canonical singularities and $\sL$ is not pseudo-effective,  then  symplectic foliation on  $\mathbb{P}(E)$ is a foliation by rational surfaces. 
\end{prop}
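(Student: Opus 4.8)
The plan is to reduce everything to a normal form for $\phi$ over the function field $K(X)$ and then read off the induced bivector from Proposition \ref{Prop-Poli}. First I would trivialize $E$ birationally: over $K(X)$ the bundle $E$ is a two-dimensional vector space, so a rational frame identifies $\mathbb{P}(E)$ with $X\times\mathbb{P}^1$ (as in the proof of Proposition \ref{Prop-Poli}) and writes the trace-free field as $\phi=a\otimes h+b\otimes e_+ +c\otimes e_-$, with $a,b,c$ rational vector fields and $h,e_+,e_-$ the standard basis of $\mathfrak{sl}_2$. Expanding the integrability condition and using $[h,e_+]=2e_+$, $[h,e_-]=-2e_-$, $[e_+,e_-]=h$ gives
\[
\phi\wedge\phi=2e_+\otimes(a\wedge b)-2e_-\otimes(a\wedge c)+h\otimes(b\wedge c).
\]
Since $h,e_+,e_-$ are linearly independent, $\phi\wedge\phi=0$ forces $a\wedge b=a\wedge c=b\wedge c=0$, i.e. $a,b,c$ are pairwise proportional over $K(X)$ and hence span a single line $\langle v_\phi\rangle$. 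Writing $a=f_1v_\phi$, $b=f_2v_\phi$, $c=f_0v_\phi$ produces the asserted normal form $\begin{pmatrix}f_1&f_2\\ f_0&-f_1\end{pmatrix}\otimes v_\phi$.

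Next I would feed $v_1=f_1v_\phi$, $v_2=f_2v_\phi$, $v_0=f_0v_\phi$ into Proposition \ref{Prop-Poli}. Because a co-Higgs bundle is a Poisson module for the trivial structure (so $\pi_*\Sigma_\phi=0$), the resulting meromorphic Poisson bivector on $X\times\mathbb{P}^1$ is the decomposable field
\[
\Sigma_\phi=\bigl(f_0+2f_1z+f_2z^2\bigr)\,v_\phi\wedge\frac{\partial}{\partial z}.
\]
The image of the anchor map of a nonzero decomposable bivector $w_1\wedge w_2$ is $\langle w_1,w_2\rangle$, so $\F_{\nabla}$ has dimension two, with tangent sheaf generated generically by $\partial/\partial z$ and the lift of $v_\phi$. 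These are exactly the relative tangent $T_{\mathbb{P}(E)/X}$ and the horizontal lift of $T\H_{v_\phi}$, whence $\F_{\nabla}=\pi^{-1}\H_{v_\phi}$ is the pull-back of the foliation by curves induced by $v_\phi$, which is the foliation $\H_{v_0}$ of the statement. This settles the first two assertions.

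For the last assertion, the leaf of $\F_{\nabla}$ through a general point is $\pi^{-1}(C)$, where $C$ is the leaf (a curve) of $\H_{v_\phi}$ below it; being the restriction of the $\mathbb{P}^1$-bundle $\mathbb{P}(E)$ to $C$, it is ruled over $C$, so it suffices to prove that a general leaf $C$ is rational. Since $v_\phi\in H^0(X,TX\otimes\sL)$ the tangent line bundle satisfies $T\H_{v_\phi}\cong\sL^{-1}$ generically, so that the hypothesis ``$\sL$ not pseudo-effective'' is exactly non-pseudo-effectivity of the canonical class $K_{\H_{v_\phi}}$ of the foliation by curves $\H_{v_\phi}$. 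Here I would invoke the rationality theory for foliations by curves with non-pseudo-effective canonical class, the canonical-singularities hypothesis on $\H_{v_\phi}$ supplying the mild singularities these results require: $\H_{v_\phi}$ is then algebraically integrable with rational general leaf $C\cong\mathbb{P}^1$, so $\pi^{-1}(C)$ is a $\mathbb{P}^1$-bundle over $\mathbb{P}^1$ and therefore a rational surface, and $\F_{\nabla}$ is a foliation by rational surfaces.

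The linear algebra of the first two steps is routine; the real content, and the step I expect to be the main obstacle, is the last one, where I must apply the correct rationality result for foliations by curves and verify that canonical singularities are precisely what lets one upgrade \emph{uniruled leaves} to genuinely rational leaves and control their closures after passing to a resolution.
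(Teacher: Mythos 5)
Your proposal is correct and follows essentially the same route as the paper: derive $v_0\wedge v_1=v_0\wedge v_2=v_1\wedge v_2=0$ from $\phi\wedge\phi=0$ (you do this in the $\mathfrak{sl}_2$-basis, the paper directly via the matrix entries), conclude the entries span a single line to get the normal form, read off the decomposable bivector $\Sigma_\phi=(f_0+2f_1z+f_2z^2)\,v_\phi\wedge\frac{\partial}{\partial z}$ from Proposition \ref{Prop-Poli} with $\sigma=0$, and invoke the Bogomolov--McQuillan/Druel rationality results for foliations by curves with canonical singularities and non-pseudo-effective canonical class (here $K_{\H_{v_\phi}}$ is identified with $\sL$ up to an effective divisor) to conclude the leaves of the pull-back are rational surfaces. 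The only cosmetic difference is that the paper normalizes one matrix entry to $1$ by assuming $v_0\neq 0$, while you keep all three rational functions.
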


\begin{proof}
We have that $$v_0\wedge v_1=v_0\wedge v_2=v_1\wedge v_2=0.$$ 
We  assume without loss of generality that the rational vector field  $v_0$  is not identically zero.  
Then, there exist  rational functions $f,g,h$ such that  $v_1= f v_0$,  $v_2=g v_0$, and  $v_2=hv_1= h fv_0$, so $g= h  f_1$.  
Therefore, we get the  rational co-Higgs fields
$$
 \Phi \otimes  v= \begin{pmatrix}f & h  f 
 \\1&-f \end{pmatrix}\otimes v_0.
$$

Now,  since the induced Poisson  bivector  is given by
$$
\Sigma= (1+  2fz+   h  fz^2) v_0 \wedge \frac{\partial}{\partial z},
$$
we conclude that the  symplectic $\F_\Sigma$ has dimension two and it is the pull-back of the  foliation $\H_{v_0}$ tangent to $v_0 $. 
Now, if  $\H_{v_0}$ has canonical singularities and $\sL$ is not pseudo-effective,  then  it follows  from   \cite{BM, Druel18}  that   $\H_{v_0}$ is a foliation by rational curves.  Hence, the  leaves  of $\F_\Sigma=\pi^*\H_{v_0}$ are rational surfaces. 
\end{proof}
The author showed  in \cite{C} that  if $(E,\phi)$  is a stable and nilpotent  co-Higgs holomorphic bundle on a compact K\"ahler surface,   then 
 the symplectic foliation induced on  $\mathbb{P}(E)$ is algebraic  with  rational leaves. 

\begin{exe}
 Rayan in  \cite{ Rayan} gave a complete description for the locus of the moduli space of stable holomorphic   $\mbox{SL}(2,\mathbb{C})$-co-Higgs bundles whose underlying bundle is Schwarzenberger. Let $Q$ be an    irreducible element of $H^0(\mathbb{P}^2, \mathcal{O}_{\mathbb{P}^2}(2))$ 
 such that the curve $C=\{Q=0\}\subset \mathbb{P}^2$ is  a nonsingular conic. Consider   a degree two covering $f^Q: \mathbb{P}^1\times \mathbb{P}^1 \to  \mathbb{P}^2 $   branched over  $C=\{Q=0\}$.  The   rank two  vector bundle $f^Q_*\O_{\mathbb{P}^1\times \mathbb{P}^1}(0,k)=V^Q_k$, for each  $k\geq0$ is called Schwarzenberger bundle \cite{Schwarzenberger}. 
  Rayan in  \cite{ Rayan} proved that  Schwarzenberger bundles are    $\mbox{SL}(2,\mathbb{C})$-co-Higgs bundles  with  co-Higgs fields of the form 
  $$
  \Phi \otimes v,
  $$
  where $ \Phi\in H^0( \mathbb{P}^2,End_0(V^Q_k)(-1))$ and $v\in H^0( \mathbb{P}^2, T \mathbb{P}^2(-1))$. 
  It follows from Proposition \ref{proposition-co-higgs} that the dimension two  symplectic foliation  on  $\mathbb{P}(V^Q_k)$ has   rational  leaves which are pull-back of the lines of the pencil tangent to $v\in H^0( \mathbb{P}^2, T \mathbb{P}^2(-1))$.
 
\end{exe}

 \begin{exe}
 We consider  in this example  a Poisson interpretation due to Pym \cite[Section 8.7]{Pym} for degree two pull-back  foliations on $ \mathbb{P}^3$. 
Let $E=\O_{\mathbb{P}^2}\oplus \O_{\mathbb{P}^2}(1)$ and a rational vector field $v_0:\O_{\mathbb{P}^2} \to T\mathbb{P}^2(-1)$.
Then the nilpotent co-Higgs field
induces on  $\mathbb{P}(E)$ a  symplectic foliation such that the contraction of $$ \mathbb{P} (\O_{\mathbb{P}^2}(1))\subset \mathbb{P}(E)  \to \mathbb{P}^3$$  correspond a symplectic foliation given by $v\wedge v_0$, where $v_0\in H^0(\mathbb{P}^3, T\mathbb{P}^3(-1))$  is a  rational vector field which is  tangent to a linear projection $\mathbb{P}^3  \dashrightarrow  \mathbb{P}^2$ and $v$ is a global holomorphic  vector field on $\mathbb{P}^2$. 
 
\end{exe}

\begin{defi}
Let $Z\to X$ be a $\mathbb{P}^1$-bundle. A codimension one holomorphic  foliation $\sG $ on $Z$ is called by {\it  Riccati foliation} 
if it is generically transversal to the $\mathbb{P}^1$-bundle $Z\to X$. 
 \end{defi}

In the next result, as a consequence of Theorem \ref{teo-mori},   we will give a geometric description for  rank two holomorphic  $\mathfrak{sl}_2$-Poisson   modules.

\begin{cor}  
Let  $(E,\nabla)$ be a  rank two  holomorphic  $\mathfrak{sl}_2$-Poisson   module  on a  klt Poisson  projective  variety $(X,\sigma)$.  
 Then there exist   projective
varieties $Y$ and $Z$ with klt singularities, a  quasi-\'etale  Poisson cover $f: W\times Y \to  X$ and  at least one of the following holds.  
\begin{itemize}
\item[(a)]    $(\pi_2)_*f^*(E,\nabla)$  is a  $\mathfrak{sl}_2$ partial   holomorphic  sheaf on $Y$, where $\pi_2$ denotes the projection on $Y$. 

\item[(b)] If $W$ and $Y$ are  generically symplectic, then $(\pi_2)_*f^*(E,\nabla)$ is a rank two  locally free sheaf 
   with a meromorphic    flat   connection with poles on the degeneracy Poisson divisor of $Y$. 

 \item[(c)]  
 If $W$ is    symplectic, then   after a birational trivialization  of $ f^*(E,\nabla)$ the   Poisson connection on the trivial bundle is defined  as 
$$
\tilde{ \nabla}=\delta_{W}+\begin{pmatrix}f_1 &f_2
 \\f_0&-f_1\end{pmatrix}\otimes v,
$$
for some rational  vector field $v$ tangent to $(Y,0)$,  rational functions $f_0,f_1,f_2 \in K(Y)$, and 
$ \delta_{W}$ denotes the  Poisson differential on $W$.
\item[(d)]    There exists a rational  map  $\zeta: Y  \dashrightarrow B$,  
over a variety $B$ with $\dim(B)= \dim(\F_{\alpha})$,   such that $(\pi_2)_*f^*(E,\nabla)$ 
corresponds to a meromorphic    $\mathfrak{sl}_2$-Poisson 
 module  $(E_0, \tilde{ \nabla}) $, such that after a birational trivialization  the  Poisson connection  on the trivial bundle is defined  as 
$$
\tilde{ \nabla}=\delta+\begin{pmatrix}f_1 &f_2
 \\f_0&-f_1\end{pmatrix}\otimes v,
$$
for some rational Poisson vector field $v$ and    rational functions $f_0,f_1,f_2$ on $X$ such that $\{f_i,f_j\}= 0$, for all $i,j$. 
\end{itemize}
\end{cor}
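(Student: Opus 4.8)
The plan is to obtain the corollary as a specialisation of Theorem \ref{teo-Morita} to the trace-free rank two setting, converting the abstract objects produced there into the explicit matrix normal forms furnished by Proposition \ref{Prop-Poli} and Proposition \ref{proposition-co-higgs}. When $(X,\sigma)$ is generically symplectic, Example \ref{exe-sym} places us immediately in case (b). Otherwise the varieties $Y$, $Z=W\times Y$ and the quasi-\'etale cover $f$, together with the transcendental foliation $\sK$ on $Y$, are precisely those produced in the proof of Theorem \ref{teo-Morita} by Druel's splitting \cite[Proposition 8.14]{Druel18}. Throughout I will use that pullback by $f$, pushforward by $\pi_2$, and passage to a partial, meromorphic, or co-Higgs structure all commute with the trace, so $\mathrm{tr}(\nabla)=0$ is preserved and each output is again an $\mathfrak{sl}_2$-object; the four alternatives of the corollary then match one-to-one the alternatives (a)--(d) of Theorem \ref{teo-Morita}.

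I would first clear cases (a) and (b). In case (a) the module $(\pi_2)_*f^*(E,\nabla)$ is flat along $\sK$, and being trace-free it is an $\mathfrak{sl}_2$ partial holomorphic sheaf on $Y$. In case (b), with $W$ and $Y$ generically symplectic, Example \ref{exe-sym} identifies the Poisson connection with a flat connection through the logarithmic anchor $\sigma_2^{\#}\colon\Omega^1_Y(\log D)\to T_Y(-\log D)$; when the degeneracy Poisson divisor $D\in|-K_Y|$ is non-reduced this connection acquires poles along $D$, and the trace-free condition persists, giving the rank two locally free sheaf with meromorphic $\mathfrak{sl}_2$ flat connection asserted in (b).

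The content is in cases (c) and (d), where a co-Higgs field is produced. Here I would apply Proposition \ref{Prop-Poli} to the trivialised module $\mathbb{P}(E)$, which is birationally equivalent to $X\times\mathbb{P}^1$, obtaining a triple of rational vector fields $(v_0,v_1,v_2)$ satisfying the Maurer--Cartan system (\ref{MC}) with the connection matrix of that proposition. In case (c) the symplectic foliation is algebraic, so $\sigma_2=0$, the factor $Y$ carries the trivial Poisson structure, and the symplectic leaf $W$ contributes the term $\delta_W$; Proposition \ref{proposition-co-higgs} then shows that $\phi\wedge\phi=0$ forces $v_0\wedge v_1=v_0\wedge v_2=v_1\wedge v_2=0$, so all $v_i=f_iv$ are proportional to a single rational vector field $v$ on $Y$ and the matrix collapses to the normal form of (c). In case (d) I would run the same computation on the transcendental variety $(Y,\sigma_0)$ equipped with the fibration $\zeta\colon Y\dashrightarrow B$, $\dim B=\dim\F_{\sigma_0}$, produced by Theorem \ref{teo-Morita}(d): the identity $D_0(\phi)=0$ together with Remark \ref{conn-Higgs} yields simultaneously $\phi\wedge\phi=0$ and $\delta(\phi)=0$. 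The former again gives $v_i=f_iv$ and the matrix normal form; the latter, after substituting $v_i=f_iv$ into (\ref{MC}) so that every right-hand side becomes $f_if_j(v\wedge v)=0$, reads $\delta(f_iv)=0$.

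The main obstacle is the final step of case (d): extracting the scalar relations $\{f_i,f_j\}=0$ from the tensorial identities $\delta(f_iv)=0$. The plan is to expand $\delta(f_iv)$ by the graded Leibniz rule for the Schouten bracket; since $v$ is a rational Poisson vector field, $\delta(v)=0$, and the identity collapses to $\sigma_0^{\#}(df_i)\wedge v=0$, i.e.\ $\sigma_0^{\#}(df_i)$ is proportional to $v$. Now $\sigma_0^{\#}(df_i)$ lies in the image of the anchor, hence is tangent to the symplectic foliation $\F_{\sigma_0}=\sK$, whereas $v$ is tangent to $T_{Y|B}$, which is generically transverse to $\sK$ by the very construction of $\zeta$ in the proof of Theorem \ref{teo-Morita}(d); since $\sigma_0^{\#}(df_i)\in T\sK$ is proportional to $v\in T_{Y|B}$ and these distributions meet only in zero, it follows that $\sigma_0^{\#}(df_i)=0$. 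Thus each $f_i$ is a Casimir of $\sigma_0$ and $\{f_i,f_j\}=\sigma_0(df_i,df_j)=0$, completing (d). The only other point requiring care, handled uniformly, is that the birational trivialisation and the descent through the quasi-\'etale cover preserve the trace-free normalisation, which is immediate since $\mathfrak{sl}_2=\ker(\mathrm{tr})$ is stable under conjugation and under the pushforward--pullback operations.
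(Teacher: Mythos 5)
Your proposal is correct, and its skeleton is the same as the paper's: the quasi-\'etale Poisson cover $f\colon W\times Y\to X$ coming from Druel's splitting in the proof of Theorem \ref{teo-Morita}, a one-to-one match of the four alternatives with those of that theorem, the trivialization of Proposition \ref{Prop-Poli}, Proposition \ref{proposition-co-higgs} for case (c), and Remark \ref{conn-Higgs} plus Theorem \ref{teo-Morita}(d) for case (d). Where you genuinely diverge is the decisive step of case (d), the derivation of $\{f_i,f_j\}=0$. The paper argues purely algebraically: from $\delta(v_i)=0$ and the proportionalities $v_1=f_1v_0$, $v_2=f_2v_0$ it gets $\delta(f_i)\wedge v_0=0$, writes $v_0=h_i\,\delta(f_i)$ with $h_i$ invertible, so all the Hamiltonian fields $\delta(f_i)$ are mutually proportional, and then $\{f_1,f_2\}=\delta(f_1)(f_2)=(h_2/h_1)\{f_2,f_2\}=0$ and similarly for the other pairs; the tangency of $\phi$ to $T_{Y|B}$ plays no role in this computation. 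You instead exploit exactly that geometric input: $\delta(f_i)$ lies in the image of the anchor, hence is tangent to $\sK=\F_{\sigma_0}$, while $v$ is tangent to $T_{Y|B}=T_{\sG}$, and $\sK$ and $\sG$ are generically transversal by the construction of $\zeta$, so the relation $\delta(f_i)\wedge v=0$ forces $\delta(f_i)=0$. Your conclusion is strictly stronger --- the $f_i$ are Casimirs of $\sigma_0$, not merely pairwise commuting --- and it shows in passing that the paper's ``invertible $h_i$'' branch is in fact vacuous in the transcendental case (the paper leaves the dichotomy $\delta(f_2)=0$ versus $v_0=h_2\delta(f_2)$ implicit). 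The trade-off: the paper's argument is self-contained algebra on the system (\ref{MC}) and would apply even without knowing $\phi$ is tangent to the fibration; yours needs that tangency but extracts more geometry from it.

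One point you should repair: you justify the collapse $\delta(f_iv)=\delta(f_i)\wedge v$ by saying ``since $v$ is a rational Poisson vector field, $\delta(v)=0$,'' but the Poisson property of $v$ is part of what the corollary asserts, so as phrased this is circular. The fix is the choice the paper makes implicitly: take $v=v_0$ (or any non-vanishing entry of $\phi$), for which $\delta(v)=\delta(v_0)=0$ follows directly from $\delta(\phi)=0$ --- equivalently, from (\ref{MC}) once all the wedge products vanish by proportionality. With that choice your transversality argument goes through verbatim.
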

\begin{proof}
From the proof of Theorem \ref{teo-mori}  we have that  there exist   projective
varieties $Y$ and $Z$ with klt singularities and a 
quasi-\'etale  Poisson cover $f: (W\times Y, \tilde{\sigma} )\to (X, \sigma)$ such that $f^*(E,\nabla)$ is a  $\mathfrak{sl}_2$-Poisson module  on $(W\times Y, \tilde{\sigma} )$. 
Then, the result follows from Theorem \ref{teo-mori}. In fact, 
if   $(Y,(\pi_2)_*  f^* (\sigma))$  is    generically  symplectic, then   $(\pi_2)_*f^*(E,\nabla)$ is a rank two  locally free sheaf 
   with a meromorphic    flat   connection with poles on the degeneracy Poisson divisor of $Y$.   
   If   $  (\pi_2)_*  f^* (\sigma)=0$,  then $(\pi_2)_*f^*(E,\nabla)$ is a co-Higgs sheaf with co-Higgs field $(\pi_2)_*f^*(\nabla)=\phi$ and    after we take  a birational trivialization  of $ f^*(E,\nabla)$ the   Poisson connection is defined   as 
   $
\tilde{ \nabla}=\delta_{W}+ (\pi_2)^*\phi. 
$
 Hence, we conclude the part (c) from proposition \ref{proposition-co-higgs}. Finally,  for the  item (d), 
 after  we  take a birational trivialization  of $ f^*(E,\nabla)$ 
we    use the remark \ref{conn-Higgs}   and  Theorem \ref{teo-Morita}, part (d),  in order to conclude
 that  
$$ \delta(\phi )=0,\ \ \ \phi  \wedge \phi  =0 $$ 
with  $\phi$ tangent  a  rational  map  $\zeta: Y  \dashrightarrow B$.  
This implies that   $$v_0\wedge v_1=v_0\wedge v_2=v_1\wedge v_2=0.$$ Then,
there exist  rational functions $f_0,f_1,f_2$ such that  $v_1=f_1v_0$,  $v_2=f_2v_0$, and  $v_1=f_0v_2 $. 
We may assume without loss of generality  that  $v_0\neq0$ and $f_2\neq 0$, since the other cases follow similarly.  By taking the Poisson derivation $\delta$ in  
$v_2=f_2v_0$, we get that $$0=\delta(v_2)=\delta(f_2)\wedge v_0$$ which implies that  $v_0=h_2\delta(f_2)$, for some invertible rational function $h_2$. 
We  also can  conclude that $v_0=h_1\delta(f_1)$. Then
$$
h_1\delta(f_1)=h_2\delta(f_2).
$$
Thus, 
$$\{f_1,f_2\}=\delta(f_1)(f_2)= \frac{h_2}{h_1} \cdot \delta(f_2)(f_2)= \frac{h_2}{h_1} \cdot  \{f_2,f_2\}=0.$$
Now, on the one hand, by using that  $v_1=f_0v_2 $, we obtain that  $v_1=h_0\delta(f_0)$. On the other hand, since $v_1=f_1 h_1\delta(f_1)$ and   $v_2=f_2 h_2\delta(f_2)$ we have that
$$
\{f_0,f_1\}=\delta(f_0)(f_1)= \frac{f_1h_1}{h_0} \cdot \delta(f_0)(f_0)=  0.
$$
and also
$\{f_0,f_2\}=0$. 
\end{proof}

As we have seen above,  the geometric  study of  the symplectic foliation  $\F_{ \nabla}$  reduces, up to a 
quasi-\'etale  Poisson cover,   to the   foliation $\F_{ \nabla_0}$  on    $  \mathbb{P}(E_0,  \nabla_0) \to (Y, \sigma_0)$.

\begin{cor}
 Let $\F_{ \nabla_0}$ be the symplectic foliation induced on    $ \pi:  \mathbb{P}(E_0) \to (Y, \sigma_0)$.  
 Then at least one of the following holds.

\begin{itemize}
\item[(a)] 
$\F_{ \nabla_0}$   is a  dimension 2 foliation   which is a pull-back of a foliation by curves on $(Y,0)$.  

\item[(b)]  
 $ \F_{ \nabla_0}$  is a Riccati  foliation of codimension  one on     $ \mathbb{P}(E_0)$, if  $ (Y, \sigma_0)$ is generically symplectic.     

\item[(c)]  
$\F_{ \nabla_0}$  is a  Riccati  foliation of codimension  one on   $ \mathbb{P}(E_0)$ which  is given by  a morphism  $\sA \to  d_{\text{\rm refl}} \pi(  \pi^{*}(T\F_{ \sigma_0}^*))\subset \Omega_{\mathbb{P}(E_0)}^{[1]}$, where $\sA$ is  a line bundle and  $d_{\text{\rm refl}} \pi: \pi^{*}\Omega_Y^{[1]} \to \Omega_{ \mathbb{P}(E_0) }^{[1]} $  is the  pull-back morphism of reflexive forms.

\item[(d)]   There exist a rational Poisson vector field $v$ generically transversal to $\F_{\sigma_0}$  such that  $\F_{ \nabla_0}$     has   dimension  $2k+2$ and it is the  pull-back of the foliation induced by $v$ and $\F_{\sigma_0}$.  In particular, if $\dim(Y)=2k+1$, then  $ \mathbb{P}(E_0)$ is generically symplectic and there exist a rational Poisson map $\zeta:  Y  \dashrightarrow  B$ generically transversal to  $\F_{\sigma_0}$, where $B$ is a generically symplectic variety with  $\dim(B)=2k$ and  the induced map  $ \mathbb{P}(E_0) \dashrightarrow  B$ is  Poisson. 
  
\end{itemize}
\end{cor}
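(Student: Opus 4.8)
The plan is to read off the geometry of $\F_{\nabla_0}$ directly from the explicit meromorphic Poisson bivector on $\mathbb{P}(E_0)$ produced in Proposition \ref{Prop-Poli}, and then to run through the four alternatives of Corollary \ref{aplic} (equivalently Theorem \ref{teo-mori}). By Proposition \ref{Prop-Poli} the Poisson structure defining $\F_{\nabla_0}$ is
$$
\Sigma_{\sigma_0}=\sigma_0+\bigl(v_0+2v_1z+v_2z^2\bigr)\wedge\frac{\partial}{\partial z},
$$
so that $T\F_{\nabla_0}=\IM(\Sigma_{\sigma_0}^{\#})$ and everything reduces to computing the rank and image of $\Sigma_{\sigma_0}^{\#}$ on the Zariski open set where $\sigma_0$, the $v_i$ and $E_0$ are regular. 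Writing $V:=v_0+2v_1z+v_2z^2$ and a $1$-form as $\alpha=\alpha_Y+a\,dz$, one gets $\Sigma_{\sigma_0}^{\#}(\alpha)=\sigma_0^{\#}(\alpha_Y)-aV+\langle\alpha_Y,V\rangle\,\partial_z$, so the leaf tangent space is generated by $T\F_{\sigma_0}$, by $V$, and by $\partial_z$. The one algebraic fact driving the classification is that, by skew-symmetry of $\sigma_0$, whenever $V\in T\F_{\sigma_0}$ (say $V=\sigma_0^{\#}\beta$) the $\partial_z$ component attached to it equals $\sigma_0(\beta,\beta)=0$; thus $\partial_z$ enters the image exactly when $V$ has a component transverse to $\F_{\sigma_0}$, and in that event both $V$ and $\partial_z$ are added.

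I would then split according to the type of $(E_0,\nabla_0)$ furnished by Corollary \ref{aplic}. In the co-Higgs case $\sigma_0=0$ one has $\Sigma_{\sigma_0}=V\wedge\partial_z$, of rank two; the flatness equations (\ref{MC}) with $\delta=0$ force $v_0\wedge v_1=v_0\wedge v_2=v_1\wedge v_2=0$, so all the $v_i$ are proportional and Proposition \ref{proposition-co-higgs} identifies $\F_{\nabla_0}$ with the pull-back of the foliation by curves $\H_{v_0}$ on $(Y,0)$, giving (a). If $(Y,\sigma_0)$ is generically symplectic then $T\F_{\sigma_0}=TY$ generically; by the computation above $\IM(\Sigma_{\sigma_0}^{\#})$ is the graph $\{w+\langle(\sigma_0^{\#})^{-1}w,V\rangle\,\partial_z:w\in TY\}$, of dimension $\dim Y$, so $\F_{\nabla_0}$ is codimension one and projects isomorphically onto $TY$, hence is transverse to the fibres of $\pi$: this is the Riccati alternative (b). To upgrade this to (c) I would invoke Example \ref{exe-sym}: the Poisson connection corresponds there to a meromorphic flat connection valued in $T\F_{\sigma_0}^{*}$, whose Riccati foliation is cut out by a reflexive $1$-form that by construction factors through $d_{\text{\rm refl}}\pi(\pi^{*}(T\F_{\sigma_0}^{*}))\subset\Omega^{[1]}_{\mathbb{P}(E_0)}$, producing the morphism $\sA\to d_{\text{\rm refl}}\pi(\pi^{*}(T\F_{\sigma_0}^{*}))$.

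Finally, in the transcendental meromorphic co-Higgs case I would use Corollary \ref{aplic}(d): after a birational trivialization $v_1=f_1v_0$ and $v_2=f_2v_0$, so $V=p(z)\,v$ with $v=v_0$ a rational Poisson vector field and $\{f_i,f_j\}=0$, while the structure theorem makes $v$ generically transverse to $\F_{\sigma_0}$. Here $V$ is transverse to $\F_{\sigma_0}$, so by the driving fact both $v$ and $\partial_z$ enter, and $\IM(\Sigma_{\sigma_0}^{\#})=T\F_{\sigma_0}\oplus\mathbb{C}v\oplus\mathbb{C}\partial_z$ has dimension $2k+2$, where $2k=\dim\F_{\sigma_0}$; thus $\F_{\nabla_0}$ is the pull-back to $\mathbb{P}(E_0)$ of the rank-$(2k+1)$ foliation generated by $\F_{\sigma_0}$ and $v$, which is (d). When $\dim Y=2k+1$ this foliation is all of $TY$, so $\F_{\nabla_0}=T\mathbb{P}(E_0)$ generically and $\mathbb{P}(E_0)$ is generically symplectic; taking the Stein factorization of the fibration tangent to $v$ gives $\zeta:Y\dashrightarrow B$ with $\dim B=2k$, and pushing forward $\sigma_0$ exhibits $B$ as generically symplectic and $\mathbb{P}(E_0)\dashrightarrow B$ as Poisson.

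The main obstacle I expect is this last alternative: showing that the auxiliary fibration is genuinely Poisson and generically transverse to $\F_{\sigma_0}$, i.e.\ that $\zeta_{*}\sigma_0$ is a well-defined generically symplectic structure on $B$ and that the induced $\mathbb{P}(E_0)\dashrightarrow B$ respects the brackets, which rests on the Casimir relations $\{f_i,f_j\}=0$ together with Proposition \ref{Prop-Polishchuk}. A secondary technical point is the identification in (c) of the conormal sheaf of the Riccati foliation with the reflexive-form image $d_{\text{\rm refl}}\pi(\pi^{*}(T\F_{\sigma_0}^{*}))$ across the singular locus of $\F_{\sigma_0}$, for which I would argue on the regular locus, whose complement has codimension $\ge 2$, and extend reflexively, exactly as in the proof of Theorem \ref{teo-Morita}.
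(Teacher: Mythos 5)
Your computation of $\IM(\Sigma_{\sigma_0}^{\#})$ is correct, and it does recover the paper's conclusions in cases (a), (b) and (d): the paper obtains (a) from Proposition \ref{proposition-co-higgs} exactly as you do, obtains (b) from the correspondence with meromorphic flat connections, and obtains (d) by computing $\Sigma_{\sigma_0}^{k+1}=(k+1)\ell\,\sigma_0^{k}\wedge v\wedge \frac{\partial}{\partial z}\neq 0$, which is the same rank count you get pointwise; for the last assertion of (d) the paper deduces $\sigma_B=\zeta_*\sigma_0$ directly from $\delta(v)=L_v\sigma_0=0$, rather than your Polishchuk-plus-Casimir route, but that is a minor difference. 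The genuine gap is alternative (c): you have misidentified which case produces it. In the paper, (c) is \emph{not} an ``upgrade'' of the generically symplectic case (b); it covers the remaining alternative of Corollary \ref{aplic}/Theorem \ref{teo-mori}, namely when $(E_0,\nabla_0)$ is a flat \emph{partial} holomorphic sheaf along the transcendental symplectic foliation, i.e.\ $\nabla_0$ factors through $T\F_{\sigma_0}\otimes E_0$ and, via the isomorphism $\vartheta\colon T\F_{\sigma_0}^*\to T\F_{\sigma_0}$ from the proof of Theorem \ref{teo-Morita}, corresponds to a partial flat meromorphic connection $E_0\to T\F_{\sigma_0}^*(D)\otimes E_0$. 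Your appeal to Example \ref{exe-sym} is off target: that example concerns generically symplectic varieties, where $T\F_{\sigma_0}^*\simeq\Omega_Y^{[1]}$ generically, so your version of (c) collapses into (b), and your three-way split (co-Higgs, generically symplectic, transcendental meromorphic co-Higgs) leaves the partial-connection alternative of Corollary \ref{aplic} with no conclusion at all. The case analysis is therefore incomplete.

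The fix is available inside your own argument: your ``driving fact'' is exactly the needed tool in the tangential case. When all the $v_i$ are tangent to $\F_{\sigma_0}$, write $v_i=\sigma_0^{\#}(\omega_i)$ with $\omega_i$ a meromorphic section of $T\F_{\sigma_0}^*$ and set $\beta=\omega_0+2\omega_1 z+\omega_2 z^2$; your graph computation gives $\IM(\Sigma_{\sigma_0}^{\#})=\{w-\langle \beta,w\rangle\,\partial_z \,:\, w\in T\F_{\sigma_0}\}$, and every such vector is annihilated by (any reflexive lift of) the Riccati form $\alpha=dz+\omega_0+2\omega_1z+\omega_2z^2$. This is precisely the paper's description of (c): the form $\alpha$, with coefficients in $\pi^{*}(T\F_{\sigma_0}^*)$, is what defines the morphism $\sA\to d_{\text{\rm refl}}\pi(\pi^{*}(T\F_{\sigma_0}^*))\subset\Omega^{[1]}_{\mathbb{P}(E_0)}$, using the pull-back morphism of reflexive forms from \cite[Theorem 1.4]{GKKP11}. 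Note that in this case the symplectic foliation is a graph over $T\F_{\sigma_0}$, so its description through $\alpha$ (rather than through transversality to $\pi$, as in (b)) is the actual content of (c); this is also why (c) cannot be subsumed into (b). So you should delete the reference to Example \ref{exe-sym} and instead run your anchor-map computation in the case where $V$ is tangent to $\F_{\sigma_0}$, which your setup already handles.
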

\begin{proof}
If  $ (Y, \sigma_0)$ is generically symplectic, then  $ \nabla_0$ corresponds   to a meromorphic flat connection.  Hence,   $\F_{ \nabla_0}$ is  a  Riccati  foliation of codimension  one. 
If $ \nabla_0$ corresponds to  a partial  flat meromorphic connection $$E_0\to T\F_{ \sigma_0}^*(D)\otimes E_0 \subset \Omega_Y^1 \otimes E_0,  $$
then $\F_{ \nabla_0}$ is  a   Riccati  foliation of codimension  one,  which is given by  the meromorphic 1-form
$$
\alpha=dz+\omega_0+2\omega_1z+\omega_2z^2
$$
where $\omega_i$'s are meromorphic  sections of  $\pi^{*}(T\F_{ \sigma_0}^* )\subset  \pi^{*}\Omega_Y^{[1]}$. Therefore, $\F_{ \nabla_0}$ is induced by  a morphism  $\sA \to  d_{\text{\rm refl}} \pi(  \pi^{*}(T\F_{ \sigma_0}^*))\subset \Omega_{\mathbb{P}(E_0)}^{[1]}$, where $\sA$ is  a line bundle and  
$$
d_{\text{\rm refl}} \pi: \pi^{*}\Omega_Y^{[1]} \to \Omega_{ \mathbb{P}(E_0) }^{[1]} 
$$
is the  pull-back morphism which there exist by  \cite[Theorem  1.4]{GKKP11}.

Now,   recall  from the proof of Proposition \ref{Prop-Poli}  that   the foliation $\F_{ \nabla_0}$  is induced by the  bivector
$$\Sigma_ {\sigma_0} = \sigma_0 + (v_0+2v_1z+v_2z^2)\wedge \frac{\partial}{\partial z}.$$
If $ \nabla_0$ corresponds to a co-Higgs field on $(Y,0)$, then from Proposition \ref{proposition-co-higgs} we have  that   $\F_{ \nabla_0}$   is a  dimension 2 foliation   which is a pull-back of a foliations by curves. This shows the part (c). 
Finally,  for the case (d) we have the bivector 
$$
\Sigma_ {\sigma_0} = \sigma_0 + v(z)\wedge \frac{\partial}{\partial z},
$$
where $v(z)= \ell v$, with $\ell$ being the non-zero rational function  $f_0+2f_1z+f_2z^2$. 
Thus, $\F_{ \nabla_0}$  is induced by 
$$
\Sigma_ {\sigma_0}^{k+1}=\sum_{j=0}^{k+1}  { k+1\choose j}\ell^{j} \sigma_0^{k+1-j} \wedge  \left(v\wedge \frac{\partial}{\partial z}\right)^{j}= (k+1) \ell \sigma_0^{k} \wedge  v\wedge \frac{\partial}{\partial z}\neq 0
$$
since $ \sigma_0^{k+1} = 0$,  $ \sigma_0^{k}\neq 0$ and   $v$ is transversal to $ \sigma_0^{k}$. 
This   show us that 
$\F_{ \nabla_0}$  is induced by the pull-back of the   foliation  generated by $v$  and $ \sigma_0^{k}$. 
Now, 
if  $\dim(Y)=2k+1$, then the symplectic foliation has codimension one and the rational vector fields $v$ induces a foliation whose leaves are  tangent to the  fibers of $\zeta:  Y  \dashrightarrow  B$. The condition $\delta(v)=L_v\sigma_0=0$, says us that there exist a non-trivial  Poisson bivector $\sigma_B$ such that $\zeta_*\sigma_0=\sigma_B$. 
\end{proof}

\end{document}